\newcommand{\bk}{\Bbbk}
\newcommand{\Z}{\mathbb{Z}}
\newcommand{\C}{\mathbb{C}}
\newcommand{\Gm}{{\mathbb{G}_{\mathrm{m}}}}
\newcommand{\PCoh}{\mathsf{PCoh}}
\newcommand{\fC}{\mathfrak{C}}
\newcommand{\D}{\mathbb{D}}
\newcommand{\IC}{\mathcal{IC}}
\newcommand{\fg}{\mathfrak{g}}
\newcommand{\fu}{\mathfrak{u}}
\newcommand{\fb}{\mathfrak{b}}
\newcommand{\su}{\mathsf{u}}
\newcommand{\uenv}{\mathcal{U}}
\newcommand{\unip}{\mathrm{unip}}
\DeclareMathOperator{\ad}{ad}
\DeclareMathOperator{\Ad}{Ad}
\DeclareMathOperator{\Int}{Int}
\newcommand{\irr}{\mathsf{L}}
\newcommand{\tcN}{{\widetilde{\mathcal{N}}}}
\newcommand{\cN}{\mathcal{N}}
\newcommand{\rd}{{\mathsf{d}}}
\newcommand{\red}{{\mathrm{red}}}
\newcommand{\bX}{\mathbf{X}}
\newcommand{\aff}{{\mathrm{aff}}}
\newcommand{\cF}{\mathcal{F}}
\newcommand{\cG}{\mathcal{G}}
\newcommand{\cH}{\mathcal{H}}
\newcommand{\cL}{\mathcal{L}}
\newcommand{\cO}{\mathcal{O}}
\newcommand{\cV}{\mathcal{V}}
\newcommand{\Rep}{\mathsf{Rep}}
\newcommand{\Coh}{\mathsf{Coh}}
\newcommand{\Cohmix}{\mathsf{Coh}^{G \times \mathbb{G_{\mathrm{m}}}}}
\newcommand{\Db}{D^{\mathrm{b}}}
\DeclareMathOperator{\Hom}{Hom}
\DeclareMathOperator{\Ext}{Ext}
\DeclareMathOperator{\Sym}{Sym}
\DeclareMathOperator{\Ind}{Ind}
\DeclareMathOperator{\shHom}{\mathscr{H}\text{\kern -3pt {\calligra\large om}}\,}
\DeclareMathOperator{\codim}{codim}
\DeclareMathOperator{\im}{im}
\newcommand{\Irr}{\mathrm{Irr}}
\DeclareMathOperator{\Spec}{Spec}
\newcommand{\ol}[1]{\overline{#1}}
\newcommand{\id}{\mathrm{id}}
\newcommand{\simto}{\xrightarrow{\sim}}
\newcommand{\la}{\langle}
\newcommand{\ra}{\rangle}
\def\@secnumfont{\bfseries}  
\newenvironment{psmallmatrix}
  {\left(\begin{smallmatrix}}
  {\end{smallmatrix}\right)}
\numberwithin{equation}{section}
\newtheorem{thm}{Theorem}[section]
\newtheorem{lem}[thm]{Lemma}
\newtheorem{prop}[thm]{Proposition}
\newtheorem{cor}[thm]{Corollary}
\theoremstyle{definition}
\theoremstyle{remark}
\newtheorem{rmk}[thm]{Remark}
\title[Calculations with graded perverse-coherent sheaves]{Calculations with graded\\ perverse-coherent sheaves}
 \author{Pramod N. Achar}
 \address{Department of Mathematics\\
   Louisiana State University\\
   Baton Rouge, LA 70803\\
   U.S.A.}
 \email{pramod@math.lsu.edu}
 \author{William D. Hardesty}
 \address{Department of Mathematics\\
   Louisiana State University\\
   Baton Rouge, LA 70803\\
   U.S.A.}
 \email{whardesty@lsu.edu}
 \thanks{P.A. was supported by NSF Grant No.~DMS-1500890.}
\begin{document}
\begin{abstract}
In this paper, we carry out several computations involving graded (or $\Gm$-equivariant) perverse-coherent sheaves on the nilpotent cone of a reductive group in good characteristic.  In the first part of the paper, we compute the weight of the $\Gm$-action on certain normalized (or ``canonical'') simple objects, confirming an old prediction of Ostrik.  In the second part of the paper, we explicitly describe all simple perverse coherent sheaves for $G = PGL_3$, in every characteristic other
than $2$ or $3$. Applications include an explicit description of the cohomology of tilting modules for the corresponding quantum group, 
as well as a proof that $\PCoh^{\Gm}(\cN)$ never admits a positive grading when the characteristic of the field is greater than~$3$. 
\end{abstract}

\maketitle

\section{Introduction}
\label{sec:intro}

Let $G$ be a reductive algebraic group over an algebraically closed field $\bk$, and let $\cN$ be its nilpotent cone.  The derived category $\Db\Coh^G(\cN)$ of $G$-equivariant coherent sheaves on $\cN$ admits a remarkable $t$-structure whose heart is known as the category of \emph{perverse-coherent sheaves}, and is denoted by $\PCoh(\cN)$.  This category has some features in common with classical (constructible) perverse sheaves: most importantly, every object has finite length, and the simple objects are produced by an ``intermediate-extension'' (or ``IC'') construction, starting from a pair $(C,\cV)$, where $C \subset \cN$ is a nilpotent orbit, and $\cV$ is an irreducible $(G \times \Gm)$-equivariant vector bundle on $C$. For applications of perverse-coherent sheaves to representation theory, see~\cite{ahr, arider, ab:pcs, bezru, bezru-psaf}.

Now let the multiplicative group $\Gm$ act on $\cN$ by $z \cdot x = z^{-2}x$.  One can then consider the derived category $\Db\Cohmix(\cN)$ of $(G \times \Gm)$-equivariant coherent sheaves, along with the full subcategory of ``graded'' perverse-coherent sheaves, denoted by $\PCoh^\Gm(\cN)$.  In this paper, we carry out several computations in this category.

In the first part of the paper, we study lifts of simple objects from $\PCoh(\cN)$ to $\PCoh^\Gm(\cN)$.  These lifts are not unique, of course: any lift can be twisted by a character of $\Gm$ to obtain another lift. But in the context of the \emph{Lusztig--Vogan bijection} (see Section~\ref{sec:gradedlv} for an explanation), each simple object in $\PCoh(\cN)$ admits a \emph{canonical} lift to $\PCoh^\Gm(\cN)$.  In Theorem~\ref{thm:graded-lv}, we compute the weight of the $\Gm$-action on the canonical lift of $\IC(C,\cV)$: it turns out to be $-\frac{1}{2}\codim \ol{C}$.  (In characteristic $0$, this result was predicted about twenty years ago by Ostrik~\cite{ostrik-Kth}.)

In the second part of the paper, we explicitly compute all simple perverse-coherent sheaves for the group $G = PGL_3$, for all characteristics other than $2$ or $3$.  For some context, we remark that at the moment, there is no known algorithm for computing simple perverse-coherent sheaves in general, even in characteristic $0$.  Here are the cases in which $\IC(C,\cV)$ was previously known:
\begin{itemize}
\item If $C$ is the zero nilpotent orbit, the problem is trivial.
\item If $\cV = \cO_C$ is the trivial vector bundle on $C$, then by~\cite[Remark~11]{bezru}, $\IC(C,\cO_C)$ is (a shift of) the push-forward of the structure sheaf of the normalization of $\overline{C}$.
\item For $G = SL_2$, there is one simple perverse-coherent sheaf (up to grading shift) that does not fall into one of the cases above, corresponding to the nontrivial line bundle on the principal nilpotent orbit.  This object is described in~\cite{achar} (but was known to experts before; cf.~\cite[Remark~11]{bezru}).
\end{itemize}
The main result of the second part of the paper (Theorem~\ref{thm:main}) adds a number of new cases to this list of examples. Here are some features of this computation.
\begin{itemize}
\item We exhibit the first explicit examples of $\IC$'s that are not concentrated in single cohomological degree.
\item In characteristic $0$, by~\cite{bezru}, our results give an explicit computation of the cohomology of the small quantum group $\su_\zeta(\mathfrak{sl}_3)$ with coefficients in a tilting module $T$.  Remarkably, we find that if $T$ is nontrivial, $H^i(\su_\zeta(\mathfrak{sl}_3), T)$ is irreducible as a $PGL_3$-representation (whenever it is nonzero).
\item Our examples show that if the characteristic of $\bk$ is larger than $3$, then $\PCoh^{\Gm}(\cN)$ does not admit a positive grading.
\end{itemize}

Here are some further remarks on the ``positive grading'' phenomenon (see Section~\ref{sec:appl} for the definition).  For any group $G$ in characteristic~$0$, $\PCoh^\Gm(\cN)$ admits a positive grading (cf.~\cite{bezru-psaf}). Furthermore, the calculations in~\cite{achar} show that for $G = SL_2$, $\PCoh^{\Gm}(\cN)$  admits a positive grading in all characteristics other than $2$.  The latter might lead one to hope for a general positivity theorem for $\PCoh^\Gm(\cN)$, but the results of this paper provide counterexamples.  This phenomenon is likely related to other positivity questions arising in geometric representation theory, such as those considered in~\cite{ar:mpsfv3,lw}.

Finally, we expect that the results in this paper will be useful for computing the cohomology of tilting modules for $PGL_3$ in positive characteristic.  (An ``abstract'' solution to this problem appears in~\cite[Proposition~9.1]{ahr}.)  We hope to return to this question some time in the future.

The paper is organized as follows. Sections~\ref{sec:prelim} and~\ref{sec:pcoh} contain general background and lemmas on nilpotent orbits, canonical sheaves, and perverse-coherent sheaves.  Section~\ref{sec:gradedlv} contains the first main result of the paper.

From Section~\ref{sec:notation} on, we restrict our attention to the group $G = PGL_3$.  In Section~\ref{sec:geometry}, we study a particular resolution of the middle nilpotent orbit.  The second main theorem appears in Section~\ref{sec:mainthm}. Finally, the applications to quantum group cohomology and to positivity questions appear in Section~\ref{sec:appl}.

\subsection*{Acknowledgments}

We are grateful to Simon Riche and to an anonymous referee for a very careful reading of this paper.

\section{Preliminaries on nilpotent orbits}\label{sec:prelim}

\subsection{Notation and conventions}

Let $\bk$ be an algebraically closed field.  For a graded $\bk$-vector space $V = \bigoplus_{m \in \Z} V_m$, we define $V\la n\ra$ to be the graded $\bk$-vector space given by $(V\la n\ra)_m = V_{m+n}$.  It is sometimes convenient to think of $V$ as a $\Gm$-representation, where $V_m$ is the $m$-weight space.  In this language, we have $V\la n\ra \cong V \otimes \bk_{-n}$, where $\bk_{-n}$ is the $1$-dimensional $\Gm$-representation of weight $-n$.

Let $G$ be a connected, reductive algebraic group over $\bk$, and let $\fg$ be its Lie algebra.  We assume that $G$ and $\bk$ satisfy the following conditions:
\begin{enumerate}
\item[(H1)] There exists a separable isogeny $\tilde G \to G$, where $\tilde G$ is a reductive group whose derived subgroup is simply connected.
\item[(H2)] The characteristic of $\bk$ is good for $G$.
\item[(H3)] There exists a nondegenerate $G$-invariant bilinear form on $\fg$.
\end{enumerate}
These conditions are very close to those in~\cite[\S2.9]{jantzen-nilp}, except that in \textit{loc.~cit.}, condition~(H1) is replaced by the stronger condition that $G$ itself have a simply-connected derived subgroup.  Below, we will invoke some results from~\cite{jantzen-nilp} that are stated under the assumptions of~\cite[\S2.9]{jantzen-nilp}.  Using~\cite[Proposition~2.7(b)]{jantzen-nilp}, one can easily check that these results remain valid under our weaker assumptions.

Choose a nondegenerate $G$-invariant bilinear form
\begin{equation}\label{eqn:bilin}
B_\fg: \fg \times \fg \to \bk
\end{equation}
as in~(H3). Choose a maximal torus and a Borel subgroup $T \subset B \subset G$.  Let $\bX$ be the weight lattice of $T$. We declare the roots corresponding to $B$ to be the negative roots, and then let $\bX^+ \subset \bX$ be the corresponding set of dominant weights.

Let $\cN$ be the nilpotent cone of $G$.  As in~\S\ref{sec:intro}, we let $\Gm$ act on $\cN$ by $z \cdot x = z^{-2}x$.  This makes the coordinate ring $\bk[\cN]$ into a graded ring that is concentrated in even, nonnegative degrees.  For $\cF \in \Db\Cohmix(\cN)$, the notation $\cF\la n\ra$ is defined similarly.

Let $C \subset \cN$ be a nilpotent orbit.  The $\Gm$-action defined above preserves every nilpotent orbit, so it makes sense to consider $(G \times \Gm)$-equivariant coherent sheaves on $C$.  Choose a representative $x_C \in C$, and let $G^{x_C}$, resp.~$(G \times \Gm)^{x_C}$, be its stabilizer in $G$, resp.~$G \times \Gm$.  Equip $C$ with the reduced locally closed subscheme structure.  Our assumptions (H1)--(H3) imply that there are isomorphisms of varieties
\[
C \cong G/G^{x_C} \cong (G \times \Gm)/(G \times \Gm)^{x_C}.
\]
(See~\cite[\S2.2 and \S2.9]{jantzen-nilp} for the former; the latter is similar.)  We therefore have equivalences of categories
\begin{equation}\label{eqn:indequiv-orbit}
\Coh^G(C) \cong \Rep(G^{x_C}),
\qquad
\Cohmix(C) \cong \Rep((G \times \Gm)^{x_C}).
\end{equation}
Next, let $\fg^{x_C} = \ker (\ad(x_C))$. As explained in~\cite[\S2.2 and \S2.9]{jantzen-nilp}, our assumptions imply that the tangent space $T_{x_C}C$ to $C$ at $x_C$ can be identified as
\[
T_{x_C}C \cong [x_C,\fg] \subset \fg.
\]
Using $B_\fg$, one can see that the cotangent space $T_{x_C}^*C \cong [x_C,\fg]^*$ is isomorphic as a $G^{x_C}$-representation to $\fg/\fg^{x_C}$.  However, this is \emph{not} an isomorphism of $(G \times \Gm)^{x_C}$-representations.  Since $\Gm$ acts on $\fg$ (and hence $\fg/\fg^{x_C}$) with weight $-2$, it acts on $\fg^*$ (and hence on $[x_C,\fg]^*$) with weight $2$.  We therefore have  
\begin{equation}\label{eqn:gxc-cotangent}
T_{x_C}^*C\la 4\ra \cong \fg/\fg^{x_C}.
\end{equation}

The following fact is well known in characteristic $0$ (cf.~\cite[\S1.4]{cm:nosla}).  The same reasoning goes through in positive characteristic as well.  For completeness, we include the proof.

\begin{lem}\label{lem:cotangent-sympl}
The cotangent space $T_{x_C}^*C$ admits a nondegenerate $G^{x_C}$-invariant symplectic form.
\end{lem}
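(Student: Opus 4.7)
The plan is to transport the question to the Lie algebra via the identification (as $G^{x_C}$-representations) $T_{x_C}^*C \cong \fg/\fg^{x_C}$ coming from $B_\fg$, and then write down an explicit symplectic form, closely modeled on the Kirillov--Kostant--Souriau form on the coadjoint orbit through $x_C$.

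Concretely, I will define a bilinear form
\[
\omega : \fg/\fg^{x_C} \times \fg/\fg^{x_C} \to \bk,
\qquad
\omega(\bar y, \bar z) = B_\fg(x_C, [y,z]).
\]
Four things must be checked: (i) well-definedness, (ii) alternating, (iii) nondegenerate, (iv) $G^{x_C}$-invariant. For (i), if $y \in \fg^{x_C}$ then $[x_C,y]=0$, so by $G$-invariance of $B_\fg$ we have $B_\fg(x_C,[y,z])=B_\fg([x_C,y],z)=0$; symmetrically in the second argument. Statement (ii) is immediate from antisymmetry of the bracket. For (iii), if $\omega(\bar y,\bar z)=0$ for all $\bar z$, then $B_\fg([x_C,y],z)=0$ for all $z$, so nondegeneracy of $B_\fg$ forces $[x_C,y]=0$, i.e. $\bar y = 0$. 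For (iv), given $g \in G^{x_C}$, the $G$-invariance of $B_\fg$ together with $\Ad(g)x_C = x_C$ gives
\[
\omega(\Ad(g)\bar y, \Ad(g)\bar z) = B_\fg\bigl(x_C, \Ad(g)[y,z]\bigr) = B_\fg\bigl(\Ad(g^{-1})x_C, [y,z]\bigr) = \omega(\bar y,\bar z).
\]

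The main technical point, and the one that genuinely uses our standing assumption (H3), is well-definedness and nondegeneracy: both rely crucially on the $G$-invariance and nondegeneracy of $B_\fg$ to slide $x_C$ past the bracket. I do not expect any serious obstacle beyond this; unlike in the characteristic~$0$ account (which typically invokes $\mathfrak{sl}_2$-triples and Jacobson--Morozov), the argument above never requires an $\mathfrak{sl}_2$-triple containing $x_C$, so good characteristic enters only through (H3). The lemma follows immediately by transporting $\omega$ back to $T_{x_C}^*C$ via the isomorphism supplied by $B_\fg$.
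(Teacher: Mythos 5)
Your proposal is correct and is essentially the paper's own argument: the paper defines the same form $\tilde\omega(u,v) = B_\fg(x_C,[u,v])$ on all of $\fg$, identifies its radical with $\fg^{x_C}$ using nondegeneracy of $B_\fg$, and passes to the quotient, which is exactly your well-definedness/nondegeneracy check packaged differently. The only minor nuance is that the alternating property should be deduced from the Lie-algebra axiom $[y,y]=0$ (as the paper does via $\tilde\omega(u,u)=0$) rather than from antisymmetry of the bracket alone, since the latter is insufficient in characteristic $2$, which is a good prime in type $A_1$.
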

\begin{proof}
Define a new bilinear form
\[
\tilde\omega : \fg \times \fg \to \bk
\qquad\text{by}\qquad
\tilde\omega(u,v) = B_\fg(x_C,[u,v]).
\]
This form is $G^{x_C}$-invariant, and it has the property that $\tilde\omega(u,u) = 0$.  However, it is not nondegenerate: its radical consists of vectors $u \in \fg$ such that
\[
B_\fg(x_C,[u,v]) = B_\fg([x_C,u],v) = 0 \qquad\text{for all $v \in \fg$.}
\]
But since $B_\fg$ is nondegenerate, this happens if and only if $u \in \fg^{x_C}$.  We conclude that $\tilde\omega$ induces a nondegenerate $G^{x_C}$-invariant symplectic form on $\fg/\fg^{x_C}$. In view of~\eqref{eqn:gxc-cotangent} (ignoring the $\Gm$-action), we are done.
\end{proof}

\subsection{Associated cocharacters}

For each nilpotent orbit $C$, choose an \emph{associated cocharacter} $\phi_{x_C}: \Gm \to G$ in the sense of~\cite[Definition~5.3]{jantzen-nilp}. Decompose the adjoint representation into weights for $\phi_{x_C}$:
\[
\fg = \bigoplus_{k \in \Z} \fg(k) 
\qquad
\text{where $\fg(k) = \{ v \mid \Ad(\phi_{x_C}(t))(v) = t^kv \}$.}
\]
Condition~(H3) implies that
\begin{equation}\label{eqn:gk-pairing}
\dim \fg(k) = \dim \fg(-k).
\end{equation}

As part of the definition of an associated cocharacter, we have
\begin{equation}\label{eqn:assoc-wt2}
x_C \in \fg(2).
\end{equation}
This implies that the group $\phi_{x_C}(\Gm)$ normalizes $G^{x_C}$.  Consider the semidirect product $\Gm \ltimes G^{x_C}$, where $\Gm$ acts on $G^{x_C}$ by $z \cdot g = \Int_{\phi_{x_C}(z)}(g)$.  It is easy to see that there is an isomorphism
\begin{equation}\label{eqn:graded-centralizer}
\Gm \ltimes G^{x_C} \simto (G \times \Gm)^{x_C}
\qquad\text{given by}\qquad
(z,g) \mapsto (\phi_C(z)g, z).
\end{equation}
Let $G^{x_C}_\unip$ be the unipotent radical of $G^{x_C}$, and let
\begin{equation}\label{eqn:gxred-defn}
G^{x_C}_\red = \{ g \in G^{x_C} \mid \text{$\phi_{x_C}(z)g = g\phi_{x_C}(z)$ for all $z \in \Gm$} \}.
\end{equation}
The notation is justified by~\cite[Propositions~5.10 and~5.11]{jantzen-nilp}, which tell us that $G^{x_C}_\red$ is a reductive (but possibly disconnected) group, and that $G^{x_C} = G^{x_C}_\red \ltimes G^{x_C}_\unip$.

The group $G^{x_C}_\unip$ is also the unipotent radical of $\Gm \ltimes G^{x_C}$, so it acts trivially on any irreducible representation.  In other words, an irreducible representation of $\Gm \ltimes G^{x_C}$ is the same as an irreducible representation of the group
\[
\Gm \ltimes G^{x_C}_\red = \Gm \times G^{x_C}_\red.
\]
In particular, any irreducible $G^{x_C}_\red$-module can be regarded as an irreducible $\Gm \ltimes G^{x_C}$-module by making $\Gm$ act on it trivially.  This construction defines an embedding
\begin{equation}\label{eqn:irred-embed}
\Irr(G^{x_C}) \hookrightarrow \Irr(\Gm \ltimes G^{x_C}).
\end{equation}
Moreover, every irreducible $(\Gm \ltimes G^{x_C})$-module is obtained by applying some grading shift $\la n\ra$ to a representation in the image of this map.

It follows from~\eqref{eqn:assoc-wt2} that $\phi_{x_C}(\Gm)$ preserves $\fg^{x_C}$, and hence that the latter also decomposes into weight spaces.  According to~\cite[Proposition~5.8]{jantzen-nilp}, only nonnegative weights occur:
\begin{equation}\label{eqn:gxc-positive}
\fg^{x_C} = \bigoplus_{k \ge 0} \fg^{x_C}(k)
\qquad
\text{where $\fg^{x_C}(k) = \fg(k) \cap \fg^{x_C}$.}
\end{equation}

\begin{lem}
\label{lem:dimc-calc}
We have
\[
\sum_{k \ge 0} k \dim \fg^{x_C}(k) = \dim C.
\]
\end{lem}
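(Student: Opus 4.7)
The plan is to translate the identity into a weight-counting calculation on the decomposition $\fg = \bigoplus_k \fg(k)$ coming from $\phi_{x_C}$. Setting $a_k := \dim \fg(k)$ and $b_k := \dim \fg^{x_C}(k)$, the isomorphism $C \cong G/G^{x_C}$ gives $\dim C = \dim \fg - \dim \fg^{x_C}$. Combining the symmetry $a_k = a_{-k}$ from~\eqref{eqn:gk-pairing} with the positivity~\eqref{eqn:gxc-positive}, this rewrites as
\[
\dim C \;=\; a_0 + 2\sum_{k \geq 1} a_k \;-\; \sum_{k \geq 0} b_k.
\]

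The crucial input is that $\ad(x_C) \colon \fg(k) \to \fg(k+2)$ is surjective for every $k \geq -1$. In characteristic zero this is classical $\mathfrak{sl}_2$-representation theory; in good positive characteristic it follows from the structure of the associated cocharacter (cf.~\cite[\S 5]{jantzen-nilp}), or equivalently from choosing an $\mathfrak{sl}_2$-triple $(x_C, h, y)$ refining $\phi_{x_C}$ and invoking semisimplicity of the induced $\mathfrak{sl}_2$-module structure on $\fg$. Combined with the fact that $\ker(\ad(x_C)|_{\fg(k)}) = \fg^{x_C}(k)$, surjectivity yields the identity
\[
b_k \;=\; a_k - a_{k+2} \qquad \text{for all } k \geq 0.
\]

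The rest is elementary bookkeeping. Telescoping $\sum_{k \geq 0}(a_k - a_{k+2})$ separately over even and odd $k$ gives $\dim \fg^{x_C} = a_0 + a_1$, so $\dim C = a_1 + 2\sum_{k \geq 2} a_k$. On the other hand, reindexing $j = k+2$,
\[
\sum_{k \geq 0} k\, b_k \;=\; \sum_{k \geq 1} k\, a_k \;-\; \sum_{j \geq 2}(j-2)\, a_j \;=\; a_1 + 2\sum_{k \geq 2} a_k,
\]
matching the preceding expression. The main obstacle is thus the surjectivity of $\ad(x_C)$ in the stated range, which is the one nontrivial piece of $\mathfrak{sl}_2$-style input; everything else is a short manipulation with the weight grading.
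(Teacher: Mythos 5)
Your proof is correct and follows essentially the same route as the paper: the key input in both is the surjectivity of $\ad(x_C)\colon \fg(k)\to\fg(k+2)$ for $k\ge 0$ (Jantzen, \emph{Nilpotent orbits}, Lemma~5.7) together with $\ker(\ad(x_C)|_{\fg(k)})=\fg^{x_C}(k)$, the symmetry $\dim\fg(k)=\dim\fg(-k)$, and $\dim C=\dim\fg-\dim\fg^{x_C}$; your telescoping of $b_k=a_k-a_{k+2}$ is just a more direct packaging of the paper's downward-induction identity $\dim\fg(k)=\sum_{j\ge0}\dim\fg^{x_C}(k+2j)$. One minor caution: the aside that surjectivity follows ``equivalently'' from semisimplicity of the $\mathfrak{sl}_2$-module structure on $\fg$ is not reliable in good positive characteristic (weights can exceed $p$), but this is immaterial since the cited structure theory of associated cocharacters already gives exactly what you need.
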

\begin{proof}
We begin with the claim that for any $k \ge 0$, we have
\begin{equation}\label{eqn:dimc-claim}
\dim \fg(k) = \sum_{j \ge 0} \dim \fg^{x_C}(k + 2j).
\end{equation}
We prove this by downward induction on $k$.  If $k \gg 0$, then $\fg(k)$ and all the $\fg^{x_C}(k+2j)$ vanish, so~\eqref{eqn:dimc-claim} holds trivially.

Now suppose that~\eqref{eqn:dimc-claim} is known when $k$ is replaced by $k+2$.  Since $k \ge 0$, we have seen in~\eqref{eqn:gxc-positive} that $\fg^{x_C}(-k-2) = 0$.  Then, by~\cite[Lemma~5.7]{jantzen-nilp}, the map $\ad(x_C): \fg(k) \to \fg(k+2)$ is surjective, so
\begin{multline*}
\dim \fg(k) = \dim \fg^{x_C}(k) + \dim \fg(k+2)  \\
= \dim \fg^{x_C}(k) + \sum_{j \ge 0} \dim \fg^{x_C}(k+2+2j)
= \sum_{j\ge 0} \dim \fg^{x_C}(k+2j),
\end{multline*}
as desired.

In view of~\eqref{eqn:gk-pairing} and~\eqref{eqn:dimc-claim}, we have
\begin{multline}\label{eqn:dimc-claim2}
\dim \fg = \sum_{k \in \Z} \dim \fg(k) = \dim \fg(0) + 2\sum_{k \ge 1} \dim \fg(k) \\
= \sum_{j \ge 0} \dim \fg^{x_C}(2j)
+ 2 \sum_{k \ge 1} \sum_{j\ge 0} \dim \fg^{x_C}(k+2j).
\end{multline}
Let us rearrange this sum in the form $\dim \fg = \sum_{i \ge 0} c_i \dim \fg^{x_C}(i)$ for some integer coefficients $c_i$.  Examining~\eqref{eqn:dimc-claim}, it is easy to see that in fact we have $c_i = i+1$ for all $i$.  That is,
\[
\dim \fg = \sum_{i \ge 0} (i+1) \dim \fg^{x_C}(i).
\]
To conclude, we note that $\dim C = \dim \fg - \dim \fg^{x_C}$ is given by
\[
\dim C = \sum_{i\ge 0} (i+1) \dim \fg^{x_C}(i) - \sum_{i \ge 0} \dim \fg^{x_C}(i)
= \sum_{i\ge 0} i \dim \fg^{x_C}(i),
\]
as desired.
\end{proof}

\subsection{Canonical bundles of nilpotent orbits}

This subsection contains computations related to the $(G \times \Gm)$-equivariant structure of the canonical sheaf of a nilpotent orbit or its closure.

\begin{prop}\label{prop:orbit-canbundle}
Let $C \subset \cN$ be a nilpotent orbit. The canonical bundle $\omega_C$ is isomorphic as a $(G \times \Gm)$-equivariant coherent sheaf to $\cO_C\la -\dim C\ra$.
\end{prop}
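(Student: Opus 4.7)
The plan is to use the equivalence $\Cohmix(C) \cong \Rep((G \times \Gm)^{x_C})$ from~\eqref{eqn:indequiv-orbit}, which identifies $(G \times \Gm)$-equivariant line bundles on $C$ with characters of $(G \times \Gm)^{x_C}$. Under this correspondence, $\omega_C$ corresponds to $\det T^*_{x_C}C$, while $\cO_C\la -\dim C\ra$ corresponds to the character on which $G^{x_C}$ acts trivially and the distinguished $\Gm \subset \Gm \ltimes G^{x_C} \cong (G \times \Gm)^{x_C}$ (coming from~\eqref{eqn:graded-centralizer} via $z \mapsto (\phi_{x_C}(z), z)$) acts with weight $\dim C$. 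So the proof reduces to verifying these two assertions about $\det T^*_{x_C}C$.

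Triviality on $G^{x_C}$ is immediate: by Lemma~\ref{lem:cotangent-sympl}, $G^{x_C}$ acts on $T^*_{x_C}C$ preserving a nondegenerate symplectic form, hence its action factors through $\mathrm{Sp}(T^*_{x_C}C) \subset \mathrm{SL}(T^*_{x_C}C)$, so its induced action on $\det T^*_{x_C}C$ is trivial.

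The main step is the $\Gm$-weight computation. Using~\eqref{eqn:gxc-cotangent}, I would rewrite $T^*_{x_C}C \cong (\fg/\fg^{x_C})\la -4\ra$ and decompose $\fg/\fg^{x_C} = \bigoplus_k \fg(k)/\fg^{x_C}(k)$ by the weights of $\phi_{x_C}$. Under the distinguished $\Gm$, the piece $\fg(k)/\fg^{x_C}(k)$ acquires weight $k - 2$ (the $\phi_{x_C}$-weight $k$ combined with the ambient weight $-2$ on $\fg$), and the $\la -4\ra$-shift raises this to $k + 2$. The total $\Gm$-weight of $\det T^*_{x_C}C$ is therefore
\[
\sum_k (k+2)\bigl(\dim \fg(k) - \dim \fg^{x_C}(k)\bigr) = \sum_k k \dim \fg(k) - \sum_k k \dim \fg^{x_C}(k) + 2(\dim \fg - \dim \fg^{x_C}),
\]
where the first term vanishes by~\eqref{eqn:gk-pairing}, the second equals $-\dim C$ by Lemma~\ref{lem:dimc-calc}, and the third is $2 \dim C$, summing to $\dim C$ as required.

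The only substantive difficulty is bookkeeping: distinguishing the $\Gm$ factor of $G \times \Gm$ from the distinguished copy inside $\Gm \ltimes G^{x_C}$, and correctly threading the $\la 4\ra$-shift of~\eqref{eqn:gxc-cotangent} through the weight computation. Once these conventions are kept straight, the calculation is mechanical.
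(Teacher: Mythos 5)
Your proposal is correct and follows essentially the same route as the paper: identify $\Cohmix(C)$ with $\Rep(\Gm \ltimes G^{x_C})$, use the symplectic form from Lemma~\ref{lem:cotangent-sympl} to kill the $G^{x_C}$-action on $\det T^*_{x_C}C$, and compute the $\Gm$-weight via~\eqref{eqn:gxc-cotangent}, \eqref{eqn:gk-pairing}, and Lemma~\ref{lem:dimc-calc}. The only cosmetic difference is that you absorb the shift $\la -4\ra$ into each weight as $(k+2)$ rather than keeping a separate $4\dim C$ term, and the arithmetic checks out either way.
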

This statement tells us that if we forget the $\Gm$-equivariance, the canonical bundle of any nilpotent orbit is trivial.
\begin{proof}
Via~\eqref{eqn:indequiv-orbit} and~\eqref{eqn:graded-centralizer}, we identify $\Cohmix(C)$ with $\Rep(\Gm \ltimes G^{x_C})$.  The canonical bundle is the top exterior power of the cotangent bundle, so we must show that as a $(\Gm \ltimes G^{x_C})$-representation, $\bigwedge^{\dim C} T_{x_C}^*C$ is isomorphic to $\bk\la -\dim C\ra$.

By Lemma~\ref{lem:cotangent-sympl}, the action of $G^{x_C}$ on $T_{x_C}^*C$ factors through the symplectic group $\mathrm{Sp}(T_{x_C}^*C)$.  It is easy to see that $\mathrm{Sp}(T_{x_C}^*C)$ acts trivially on $\bigwedge^{\dim C} T_{x_C}^*C$, so $G^{x_C}$ does as well.

On the other hand, the weight of the $\Gm$-action on $\bigwedge^{\dim C} T_{x_C}^*C$ is the sum of its weights (with multiplicities) on $T_{x_C}^*C$.  Using~\eqref{eqn:gxc-cotangent}, we have
\[
\sum(\text{weights of $[x_C,\fg]^*$}) = 
4 \dim C + \sum(\text{weights of $\fg$})
-\sum(\text{weights of $\fg^{x_C}$}).
\]
From~\eqref{eqn:graded-centralizer}, we see that $\Gm$ acts on $\fg(k)$ or on $\fg^{x_C}(k)$ with weight $k - 2$.  The $\Gm$-weight of $\bigwedge^{\dim C} T_{x_C}^*C$ is therefore given by
\begin{multline*}
4 \dim C + \sum_{k \in \Z} (k-2) \dim \fg(k) - \sum_{k \ge 0} (k-2) \dim \fg^{x_C}(k) 
\\
= 4 \dim C - 2 (\dim \fg - \dim \fg^{x_C}) + \sum_{k \in \Z} k \dim \fg(k) - \sum_{k \ge 0} k \dim \fg^{x_C}(k) = \dim C.
\end{multline*}
Here, we have used Lemma~\ref{lem:dimc-calc} and the fact (implied by~\eqref{eqn:gk-pairing}) that $\sum k \dim \fg(k) = 0$.
\end{proof}

\begin{prop}\label{prop:gorenstein-dc}
Let $C$ be a nilpotent orbit,  and let $a: \ol{C} \to \Spec\bk$ be the structure map.   Then $(a^!\cO_{\Spec\bk})|_C \cong \cO_C[\dim C]\la -\dim C\ra$.  If $\ol{C}$ is Gorenstein, then $a^!\cO_{\Spec\bk} \cong \cO_{\ol{C}}[\dim C]\la -\dim C\ra$.
\end{prop}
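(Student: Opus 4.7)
The plan is to handle the two assertions in turn. The first is a formal consequence of Grothendieck duality on the smooth locus, combined with Proposition~\ref{prop:orbit-canbundle}; the second requires an extension argument using the Gorenstein hypothesis.

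First, let $j : C \hookrightarrow \ol{C}$ denote the open inclusion. Since $j$ is an open immersion, $j^! = j^*$, so
\[
(a^! \cO_{\Spec\bk})|_C \;=\; j^*(a^! \cO_{\Spec\bk}) \;\cong\; (a \circ j)^! \cO_{\Spec\bk} \;\cong\; \omega_C[\dim C],
\]
where the last step is equivariant Grothendieck duality for the smooth structure map $a \circ j: C \to \Spec\bk$. Proposition~\ref{prop:orbit-canbundle} identifies $\omega_C$ with $\cO_C\la -\dim C \ra$, which gives the desired formula.

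For the second assertion, suppose $\ol{C}$ is Gorenstein. Then $a^! \cO_{\Spec\bk}$ is concentrated in a single cohomological degree and has the form $\omega_{\ol{C}}[\dim C]$ (using that $\dim \ol{C} = \dim C$), where $\omega_{\ol{C}}$ is a $(G \times \Gm)$-equivariant line bundle on $\ol{C}$. By the first part, $\omega_{\ol{C}}|_C \cong \cO_C\la -\dim C \ra$ equivariantly; the task is to promote this isomorphism from $C$ to $\ol{C}$.

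For this step I would combine two inputs. First, Gorenstein implies Cohen--Macaulay, hence $\ol{C}$ satisfies Serre's condition $S_2$; in particular, every line bundle on $\ol{C}$ is reflexive and is recovered as $j_* j^*$ of itself, provided that $\ol{C} \setminus C$ has codimension at least two. Second, $\ol{C} \setminus C$ is a union of smaller nilpotent orbits, and by Lemma~\ref{lem:cotangent-sympl} every nilpotent orbit is even-dimensional, so each smaller orbit has codimension at least two in $\ol{C}$. Applying $j_*$ to the equivariant isomorphism produced in the first part then yields
\[
\omega_{\ol{C}} \;\simto\; j_*(\omega_{\ol{C}}|_C) \;\simto\; j_*\bigl(\cO_C\la -\dim C \ra\bigr) \;\simto\; \cO_{\ol{C}}\la -\dim C \ra,
\]
as required.

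The main substantive ingredient beyond formal duality is the codimension-two claim for the boundary, which rests on the even-dimensionality of nilpotent orbits coming from Lemma~\ref{lem:cotangent-sympl}. Everything else is routine bookkeeping with Grothendieck duality and reflexive sheaves on a Cohen--Macaulay scheme.
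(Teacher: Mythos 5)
Your proof is correct and follows essentially the same route as the paper: restrict to $C$ and apply Proposition~\ref{prop:orbit-canbundle}, then in the Gorenstein case identify $a^!\cO_{\Spec\bk}$ as a shifted line bundle and pin it down by its restriction to $C$, using that the boundary has codimension at least $2$ (even-dimensionality of orbits). The only cosmetic difference is the extension mechanism: the paper invokes full faithfulness of restriction on reflexive sheaves (Hartshorne), while you use the equivalent $j_*j^*$-recovery of line bundles on an $S_2$ scheme.
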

\begin{proof}
Let $a_0: C \to \Spec\bk$ be the structure map of the orbit.  Since $C$ is an open subset of $\ol{C}$, we have $(a^!\cO_{\Spec \bk})|_C \cong a_0^!\cO_{\Spec \bk}$.  Since $C$ is a smooth variety, it is well known that $a_0^!\cO_{\Spec \bk} \cong \omega_C[\dim C]$.  By Proposition~\ref{prop:orbit-canbundle}, we have $a_0^!\cO_{\Spec \bk} \cong \cO_C[\dim C]\la -\dim C\ra$.

Now, $a^!\cO_{\Spec \bk}$ is a dualizing complex for $\ol{C}$.  If $\ol{C}$ is Gorenstein, any dualizing complex is a shift of a line bundle, by, say,~\cite[Tag 0BFQ]{stacks-project}.  By the preceding paragraph, $(a^!\cO_{\Spec \bk})|_C$ is concentrated in cohomological degree $-\dim C$, so $a^!\cO_{\Spec \bk}$ must be of the form $\cL[\dim C]$ for some line bundle $\cL$.  The line bundle $\cL$ is also determined by its restriction to $C$: because the complement of $C$ in $\ol{C}$ has codimension at least $2$, we know by~\cite[Theorem~1.12]{hartshorne3} that the restriction functor $\Cohmix(\ol{C}) \to \Cohmix(C)$ is fully faithful on reflexive sheaves, and in particular on line bundles.  Since $\cL|_C \cong \cO_C\la -\dim C\ra \cong \cO_{\ol{C}}\la -\dim C\ra|_C$, we conclude that $\cL \cong \cO_{\ol{C}}\la -\dim C\ra$.
\end{proof}

\begin{cor}\label{cor:orbit-dc}
Let $C$ be a nilpotent orbit, and let $i: \ol{C} \hookrightarrow \cN$ be the inclusion map. Then $(i^!\cO_\cN)|_C \cong \cO_C[-\codim C]\la \codim C\ra$.  If $\ol{C}$ is Gorenstein, then $i^!\cO_\cN \cong \cO_{\ol{C}}[-\codim C]\la \codim C\ra$.
\end{cor}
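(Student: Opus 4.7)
The plan is to deduce the corollary from Proposition~\ref{prop:gorenstein-dc} via the factorization of the structure map of $\ol{C}$ through $\cN$, using that $\cN$ is itself the closure of the regular nilpotent orbit.

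First, I would write $a \colon \ol{C} \to \Spec\bk$ as $a = b \circ i$, where $b \colon \cN \to \Spec\bk$ is the structure map of the nilpotent cone. Functoriality of the exceptional inverse image gives $a^! \cong i^! \circ b^!$.

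Second, I would compute $b^!\cO_{\Spec\bk}$ by applying Proposition~\ref{prop:gorenstein-dc} to the regular nilpotent orbit $C_{\mathrm{reg}}$. Under hypotheses~(H1)--(H3), $\cN = \overline{C_{\mathrm{reg}}}$ is a normal complete intersection, hence Gorenstein, so the proposition yields
\[
b^!\cO_{\Spec\bk} \cong \cO_\cN[\dim\cN]\la -\dim\cN\ra,
\]
or equivalently $\cO_\cN \cong b^!\cO_{\Spec\bk}[-\dim\cN]\la\dim\cN\ra$. Applying $i^!$ then gives
\[
i^!\cO_\cN \cong a^!\cO_{\Spec\bk}[-\dim\cN]\la\dim\cN\ra.
\]

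Third, I would conclude by a second application of Proposition~\ref{prop:gorenstein-dc}, this time to $C$ itself. For the first assertion of the corollary, restricting the identity above to $C$ and using $(a^!\cO_{\Spec\bk})|_C \cong \cO_C[\dim C]\la -\dim C\ra$ yields $(i^!\cO_\cN)|_C \cong \cO_C[\dim C - \dim\cN]\la\dim\cN - \dim C\ra$, which is $\cO_C[-\codim C]\la\codim C\ra$. For the second assertion, if $\ol{C}$ is Gorenstein then $a^!\cO_{\Spec\bk} \cong \cO_{\ol{C}}[\dim C]\la -\dim C\ra$ by the proposition, and the analogous rewriting gives the claim.

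The only nonformal input is the Gorenstein property of $\cN$, which is where our hypotheses are used; everything else is bookkeeping of shifts and twists, with $\codim C = \dim\cN - \dim C$.
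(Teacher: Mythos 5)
Your argument is correct and is essentially the paper's own proof: factor the structure map of $\ol{C}$ through $\cN$, apply Proposition~\ref{prop:gorenstein-dc} to the principal nilpotent orbit to get $i^!\cO_\cN \cong a^!\cO_{\Spec\bk}[-\dim\cN]\la\dim\cN\ra$, and then apply the proposition once more to $C$. The only cosmetic difference is that the paper justifies the Gorenstein property of $\cN$ by citing~\cite[Theorem~5.3.2]{bk} rather than the complete-intersection argument you sketch.
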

\begin{proof}
Let $a_\cN: \cN \to \Spec \bk$ and $a: \ol{C} \to \Spec \bk$ be the structure maps.  The variety $\cN$ is Gorenstein by~\cite[Theorem~5.3.2]{bk}, so by Proposition~\ref{prop:gorenstein-dc} applied to the principal nilpotent orbit, we have
\[
i^!\cO_\cN \cong i^! a_\cN^!\cO_{\Spec \bk}[-\dim \cN] \la \dim \cN\ra
\cong a^!\cO_{\Spec \bk}[-\dim \cN] \la \dim \cN\ra.
\]
The claims follow by another application of Proposition~\ref{prop:gorenstein-dc}.
\end{proof}

\section{Perverse-coherent sheaves}\label{sec:pcoh}

\subsection{Andersen--Jantzen sheaves}

Let $\fu$ be the Lie algebra of the unipotent radical of $B$, and let $\tcN = G \times^B \fu$.  Let $\pi: \tcN \longrightarrow \cN$ be the Springer resolution.  Let $\Gm$ act on $\fu$ by $z \cdot x = z^{-2}z$.  Then there is an induced action of $\Gm$ on $\tcN$ that commutes with the $G$-action, and the map $\pi$ is $(G \times \Gm)$-equivariant.

For $\lambda \in \bX$, let $\bk_\lambda$ be the corresponding $1$-dimensional $B$-representation, and let $\cO_{G/B}(\lambda)$ be the corresponding line bundle on $G/B$.  We regard this as a $(G \times \Gm)$-equivariant line bundle by having $\Gm$ act trivially on $\bk_\lambda$.  Let $p: \tcN \to G/B$ be the projection map, and let $\cO_\tcN(\lambda) = p^*\cO_{G/B}(\lambda)$.  Finally, set
\[
A_\lambda = \pi_*\cO_\tcN(\lambda) \in \Db\Cohmix(\cN).
\]
This object is called an \emph{Andersen--Jantzen sheaf}.  This sheaf (or rather, complex of sheaves) can also be written down as a complex of $G$-representations:
\begin{equation}\label{eqn:aj-sheaf}
A_\lambda \cong R\Ind_B^G (\Sym(\fu^*) \otimes \bk_\lambda).
\end{equation}
In this language, $A_\lambda$ is a complex of modules over the ring
\begin{equation}\label{eqn:kn-ind}
\bk[\cN] \cong R\Ind_B^G (\Sym(\fu^*)).
\end{equation}
(For the vanishing of higher cohomology on the right-hand side of this isomorphism, see~\cite[Theorem~5.2.1]{bk}.)

In~\eqref{eqn:aj-sheaf} and~\eqref{eqn:kn-ind}, we can replace $B$ by another Borel subgroup, if we also replace $\bk_\lambda$ by the appropriate conjugate.  For example, let $B^+ \subset G$ be the Borel subgroup containing $T$ and opposite to $B$, and let $\fu^+$ be the Lie algebra of its unipotent radical.  Since $B^+$ is obtained from $B$ by conjugating by (a lift to $G$ of) the longest element of the Weyl group $w_0$, we have
\begin{equation}\label{eqn:aj-opp}
A_\lambda \cong R\Ind_{B^+}^G (\Sym((\fu^+)^*) \otimes \bk_{w_0\lambda}).
\end{equation}
Here, the right-hand side is a complex of modules over the ring
\begin{equation}\label{eqn:kn-opp}
\bk[\cN] \cong R\Ind_{B^+}^G (\Sym((\fu^+)^*)).
\end{equation}

\subsection{Serre--Grothendieck duality}

The \emph{Serre--Grothendieck duality functor} is defined to be the functor $\D_\cN: \Db\Cohmix(\cN) \to \Db\Cohmix(\cN)$ given by
\[
\D_\cN = R\shHom({-},\cO_\cN).
\]
This functor is an antiautoequivalence, and it satisfies $\D \circ \D \cong \id$.  Note that we are making a choice of normalization here: one could use another dualizing complex on $\cN$ instead of $\cO_\cN$.  This choice agrees with that in~\cite{achar-pcoh, achar, bezru}, but not with~\cite{bez:qes}.

More generally, for any nilpotent orbit $C$, we define $\D_{\ol{C}}: \Db\Cohmix(\ol{C}) \to \Db\Cohmix(\ol{C})$ by
\[
\D_{\ol{C}} = R\shHom({-},i_{\ol{C}}^! \cO_\cN),
\]
where $i_{\ol{C}}: \ol{C} \hookrightarrow \cN$ is the inclusion map.  With these conventions, we have
\begin{equation}\label{eqn:dual-olc}
i_{\ol{C}*} \circ \D_{\ol{C}} \cong \D_{\cN} \circ i_{\ol{C}*}.
\end{equation}
We also define $\D_\tcN: \Db\Cohmix(\tcN) \to \Db\Cohmix(\tcN)$ by $\D_\tcN = R\shHom({-}, \cO_{\tcN})$.  We again have $\pi_* \circ \D_\tcN \cong \D_\cN \circ \pi_*$.  As an immediate consequence, we have
\begin{equation}\label{eqn:aj-dual}
\D_\cN(A_\lambda) \cong A_{-\lambda}.
\end{equation}

\subsection{Perverse-coherent sheaves}

For $\lambda \in \bX^+$, let $\delta_\lambda = \min \{ \ell(w) \mid w\lambda \in -\bX^+ \}$, and then let
\begin{equation}\label{eqn:std-defn}
\Delta_\lambda = A_{w_0\lambda}\la \delta_\lambda\ra
\qquad\text{and}\qquad
\nabla_\lambda = A_\lambda\la -\delta_\lambda\ra.
\end{equation}
It follows from~\eqref{eqn:aj-dual} that
\begin{equation}\label{eqn:std-dual}
\D(\Delta_\lambda) \cong \nabla_{-w_0\lambda}.
\end{equation}

Recall that the \emph{perverse-coherent t-structure} on $\Db\Cohmix(\cN)$ is the pair of full subcategories $({}^pD^{\le 0}, {}^pD^{\ge 0})$ of $\Db\Cohmix(\cN)$ defined as follows:
\begin{align*}
{}^p D^{\le 0} &= 
\begin{array}{c}
\text{the subcategory generated under} \\
\text{extensions by $\{ \Delta_\lambda\la n\ra[k] : \lambda \in \bX^+, n \in \Z, k \ge 0 \}$,}
\end{array}
\\
{}^p D^{\ge 0} &= 
\begin{array}{c}
\text{the subcategory generated under} \\
\text{extensions by $\{ \nabla_\lambda\la n\ra[k] : \lambda \in \bX^+, n \in \Z, k \le 0 \}$.}
\end{array}
\end{align*}
Let $\PCoh^\Gm(\cN) = {}^pD^{\le 0} \cap {}^pD^{\ge 0}$ be the heart of this t-structure.  This is the category of $(G \times \Gm)$-equivariant perverse-coherent sheaves on $\cN$.  For background and general information on these objects, see~\cite{achar-pcoh, achar, ab:pcs, bez:qes}.  Some key features are as follows: $\PCoh^\Gm(\cN)$ is preserved by $\D_\cN$, and every object has finite length.  For each $\lambda \in \bX^+$, the objects $\Delta_\lambda$ and $\nabla_\lambda$ belong to $\PCoh^\Gm(\cN)$, and there is a canonical morphism $\Delta_\lambda \to \nabla_\lambda$.  Denote the image of this morphism by
\[
\fC_\lambda = \im(\Delta_\lambda \to \nabla_\lambda).
\]
This is a simple object, and up to grading shift, every simple object is of this form.  It follows from~\eqref{eqn:std-dual} that
\begin{equation}\label{eqn:ic-dual}
\D_\cN(\fC_\lambda) \cong \fC_{-w_0\lambda}.
\end{equation}

There is a second approach to classifying simple perverse-coherent sheaves, as follows. Given a nilpotent orbit $C \subset \cN$ and an irreducible $(G \times \Gm)$-equivariant vector bundle $\cV$ on $C$, there is (functorial) perverse-coherent sheaf
\[
\IC(C,\cV)
\]
that is characterized by the following properties: it is supported on $\ol{C}$; it satisfies
\[
\textstyle
\IC(C,\cV)|_C \cong \cV[-\frac{1}{2}\codim C];
\]
and it has no nonzero subobject or quotient supported on $\ol{C} \smallsetminus C$. It turns out that $\IC(C,\cV)$ is simple, and every simple perverse-coherent sheaf arises in this way.  In view of~\eqref{eqn:indequiv-orbit} and~\eqref{eqn:graded-centralizer}, we can replace the vector bundle $\cV$ by an irreducible $(\Gm \ltimes G^{x_C})$-representation.

If $V$ is a $G^{x_C}$-representation, regard it as a $(\Gm \ltimes G^{x_C})$-representation by having $\Gm$ act trivially (cf.~\eqref{eqn:irred-embed} and the discussion preceding it).  Then every irreducible $(\Gm \ltimes G^{x_C})$-representation is of the form $V\la n\ra$ for some irreducible $G^{x_C}$-representation $V$ and some integer $n$.  To summarize, we have a canonical bijection
\begin{align}\label{eqn:ic-classify}
\left\{
\begin{array}{c}
\text{simple perverse-} \\
\text{coherent sheaves}
\end{array}
\right\}
&\leftrightarrow
\left\{ (C,V) \Big|
\begin{array}{c}
\text{$C$ a nilpotent orbit, and $V$ an} \\
\text{irreducible $G^{x_C}$-representation}
\end{array}
\right\} \times \Z, \\
\IC(C,V)\la n\ra &\leftrightarrow ((C,V),n). \notag
\end{align}
We will compare the two classifications of simple objects in the next subsection.

\begin{lem}\label{lem:ic-dual-calc}
Let $C$ be a nilpotent orbit, and let $V$ be a $G^{x_C}$-representation.  For any $n \in \Z$, we have $\D(\IC(C,V)\la n\ra) \cong \IC(C, V^*)\la \codim C - n\ra$.
\end{lem}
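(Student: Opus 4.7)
The plan is to verify that $\D_\cN(\IC(C,V)\la n\ra)$ satisfies the three defining properties of $\IC(C,V^*)\la\codim C - n\ra$ as a simple perverse-coherent sheaf: support in $\ol{C}$, the prescribed restriction to the open orbit $C$, and the absence of nonzero subobjects or quotients supported on $\ol{C}\smallsetminus C$. Two of these are formal, and the third carries the real content. It is also harmless to reduce to the case $n=0$ using $\D_\cN(\cF\la n\ra)\cong \D_\cN(\cF)\la -n\ra$.

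I would dispatch support by invoking~\eqref{eqn:dual-olc}: writing $\IC(C,V)\cong i_{\ol{C}*}\cG$ for some $\cG$ on $\ol{C}$, the identity $\D_\cN\circ i_{\ol{C}*}\cong i_{\ol{C}*}\circ \D_{\ol{C}}$ shows that the dual is again pushed forward from $\ol{C}$. The boundary condition is equally formal: since $\D_\cN$ is an anti-autoequivalence of $\PCoh^\Gm(\cN)$ that interchanges subobjects and quotients, any subobject or quotient of $\D_\cN(\IC(C,V))$ supported on $\ol{C}\smallsetminus C$ would dualize to a quotient or subobject of $\IC(C,V)$ with the same support, contradicting the definition of $\IC(C,V)$.

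The main step is the restriction to $C$. Let $j:C\hookrightarrow \ol{C}$ denote the open inclusion. Since $j$ is an open immersion, $j^*$ commutes with $R\shHom$, so
\[
\bigl(\D_\cN(\IC(C,V))\bigr)\big|_C \;\cong\; R\shHom\bigl(\IC(C,V)|_C,\ j^*i_{\ol{C}}^!\cO_\cN\bigr).
\]
By Corollary~\ref{cor:orbit-dc} the second argument equals $\cO_C[-\codim C]\la\codim C\ra$, while the first is $\cV[-\tfrac{1}{2}\codim C]$, where $\cV$ is the $(G\times\Gm)$-equivariant vector bundle on $C$ corresponding to $V$ via~\eqref{eqn:indequiv-orbit} and~\eqref{eqn:graded-centralizer}. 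Because $C$ is smooth, $R\shHom(\cV,\cO_C)$ collapses to the dual bundle $\cV^\vee$, which corresponds to the representation $V^*$. Combining these identifications and tracking the cohomological and $\Gm$-grading shifts then yields
\[
\bigl(\D_\cN(\IC(C,V))\bigr)\big|_C \;\cong\; \cV^*\bigl[-\tfrac{1}{2}\codim C\bigr]\la\codim C\ra,
\]
which is exactly the restriction of $\IC(C,V^*)\la\codim C\ra$, so uniqueness of the IC-extension finishes the argument.

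The only delicate aspect is the shift bookkeeping: both a homological shift and an internal $\Gm$-grading shift appear in Corollary~\ref{cor:orbit-dc}, and each must be carefully combined with the shifts built into the normalization of $\IC$ and with the $\la -n\ra$ coming from the duality $\D_\cN(\cF\la n\ra)\cong \D_\cN(\cF)\la -n\ra$. Beyond this bookkeeping no new ingredient is needed.
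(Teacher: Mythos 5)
Your proposal is correct and follows essentially the same route as the paper: support is handled via~\eqref{eqn:dual-olc}, the restriction to $C$ is computed with Corollary~\ref{cor:orbit-dc} (with the same shift bookkeeping), and the dual is identified as an $\IC$ sheaf. The only cosmetic difference is that the paper skips your verification of the no-boundary-subquotient condition by simply noting that $\D$ is an equivalence, so the dual is again a simple object of $\PCoh^\Gm(\cN)$ supported on $\ol{C}$ and hence determined by its restriction to $C$.
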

\begin{proof}
Since $\D$ is an equivalence of categories, $\D(\IC(C,V)\la n\ra)$ is still a simple object of $\PCoh^\Gm(\cN)$. By~\eqref{eqn:dual-olc}, $\D(\IC(C,V)\la n\ra)$ is still supported on $\ol{C}$, so to compute it, it is enough to determine the vector bundle
\[
\textstyle
(\D(\IC(C,V)\la n\ra))|_C[\frac{1}{2} \codim C].
\]
By Corollary~\ref{cor:orbit-dc}, this is given by
\begin{multline*}
\textstyle
R\shHom(V[-\frac{1}{2}\codim C]\la n\ra, \cO_C[-\codim C]\la\codim C\ra)[\frac{1}{2} \codim C] \\
\cong V^*\la \codim C -n\ra,
\end{multline*}
as desired.
\end{proof}

\begin{rmk}\label{rmk:pcoh-0}
Let $C_0$ be the zero nilpotent orbit, and let $i_0: C_0 \hookrightarrow \cN$ be the inclusion map.  A $(G \times \Gm)$-equivariant coherent sheaf on $C_0$ is just a finite-dimensional $(G \times \Gm)$-module.  For such a module $M$, we have $\IC(C_0,M) \cong i_{0*}M[-\frac{1}{2}\dim \cN]$.

More generally, a perverse-coherent sheaf supported set-theoretically (but not necessarily scheme-theoretically) on $C_0$ must be an extension of finitely many such objects $\IC(C_0,M)$.  Such an object is concentrated in cohomological degree $\frac{1}{2}\dim \cN$, and its cohomology sheaf in that degree must have finite-dimensional global sections.  In fact, we have
\[
\left\{
\begin{array}{c}
\text{perverse-coherent sheaves} \\
\text{with $0$-dimensional} \\
\text{support}
\end{array}
\right\}
=
\left\{ {\textstyle\cF[-\frac{1}{2}\dim \cN]} \,\Bigg|\,
\begin{array}{c}
\text{$\cF \in \Cohmix(\cN)$ with} \\
\text{finite-dimensional} \\
\text{global sections}
\end{array}
\right\}.
\]
\end{rmk}

\section{The graded Lusztig--Vogan bijection}\label{sec:gradedlv}

Comparing the two classifications of simple objects in $\PCoh^\Gm(\cN)$, we see that for each $\lambda \in \bX^+$, there is a nilpotent orbit $C$, an irreducible $G^{x_C}$-representation $V$, and an integer $n_{C,V}$ such that
\begin{equation}\label{eqn:graded-lv}
\fC_\lambda \cong \IC(C,V)\la n_{C,V} \ra.
\end{equation}
In particular, if we ignore the grading shift, we see that there is a canonical bijection
\[
\bX^+ \leftrightarrow
\left\{ (C,V) \Big|
\begin{array}{c}
\text{$C$ a nilpotent orbit, and $V$ an} \\
\text{irreducible $G^{x_C}$-representation}
\end{array}
\right\}
\]
known as the \emph{Lusztig--Vogan bijection} (because its existence had been conjectured by Lusztig~\cite{lus:cawg4} and Vogan~\cite{vogan}).  This bijection was established in~\cite{bez:qes} using $G$-equivariant (rather than $(G \times \Gm)$-equivariant) perverse-coherent sheaves, so the problem of determining the integer $n_{C,V}$ in~\eqref{eqn:graded-lv} did not arise.

In~\cite[\S3]{ostrik-Kth}, Ostrik conjectured (at least when $\bk$ has characteristic $0$) that $n_{C,V} = \frac{1}{2}\codim C$.  (See the discussion between Conjectures~2 and~3 in~\cite{ostrik-Kth}.  Most of the conjectures in~\cite{ostrik-Kth} have been proved in~\cite{bezru, bezru-psaf}, but those papers did not determine $n_{C,V}$.)  In this section, we will prove Ostrik's conjecture.  The proof will involve the study of an \emph{opposition} of $G$, i.e., an automorphism $\sigma: G \longrightarrow G$ that satisfies $\sigma(B) = B^+$, preserves the maximal torus $T$, and has the property that $\sigma|_T: T \to T$ is given by $t \mapsto t^{-1}$.  (The existence of such an automorphism follows from~\cite[II.1.16]{jantzen}.  That result describes an antiautomorphism $\tau: G \longrightarrow G$ with similar properties; one can then set $\sigma(t) = \tau(t^{-1})$.)

For any $G$-representation $(V, \varphi: G \to GL(V))$, let $V^\sigma$ denote the representation whose underlying vector space is still $V$, but where the $G$-action is given by $\varphi \circ \sigma: G \to GL(V)$.  The $T$-weights of $V^\sigma$ are the negatives of those of $V$.  As a consequence, we have
\[
V^\sigma \cong V^* \qquad\text{if $V$ is an irreducible $G$-representation.}
\]
(In general, this is is \emph{not} true for reducible representations.)  Under our assumptions (H1)--(H3), the adjoint representation is irreducible and self-dual, so there exists a $G$-equivariant isomorphism
\begin{equation}\label{eqn:fg-twist}
\fg^\sigma \cong \fg.
\end{equation}
Let us fix such an isomorphism once and for all.  By considering the definition of the $G$-action on the rings of coordinate functions on both sides of~\eqref{eqn:fg-twist} (see~\cite[I.2.7]{jantzen}), we obtain an isomorphism $\bk[\fg]^\sigma \cong \bk[\fg]$, and then an isomorphism
\begin{equation}\label{eqn:kn-twist}
\bk[\cN]^\sigma \cong \bk[\cN].
\end{equation}
Using~\eqref{eqn:kn-twist}, we can regard $({-})^\sigma$ as a functor
\[
({-})^\sigma: \Db\Cohmix(\cN) \to \Db\Cohmix(\cN).
\]

\begin{prop}\label{prop:pcoh-twist}
The functor $({-})^\sigma: \Db\Cohmix(\cN) \to \Db\Cohmix(\cN)$ is t-exact for the perverse-coherent t-structure.  It satisfies
\[
\Delta_\lambda^\sigma \cong \Delta_{-w_0\lambda},
\qquad
\nabla_\lambda^\sigma \cong \nabla_{-w_0\lambda},
\qquad
\fC_\lambda^\sigma \cong \fC_{-w_0\lambda}.
\]
\end{prop}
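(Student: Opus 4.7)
The plan is to reduce the entire proposition to the single identity
\[
A_\lambda^\sigma \cong A_{-w_0\lambda} \quad \text{for all } \lambda \in \bX.
\]
Once this is in hand, $\Delta_\lambda^\sigma \cong \Delta_{-w_0\lambda}$ follows by unfolding \eqref{eqn:std-defn} and using the equality $\delta_\lambda = \delta_{-w_0\lambda}$, which holds because the $W$-stabilizers of $\lambda$ and $-w_0\lambda$ are conjugate by $w_0$; the $\nabla$-statement is analogous (or deducible from the $\Delta$-statement via \eqref{eqn:std-dual} upon noting that $({-})^\sigma$ commutes with $\D_\cN$). Since $({-})^\sigma$ is $\Gm$-trivial, it commutes with $\la n\ra$ and with cohomological shifts, and the fact that it permutes the families of $\Delta$'s (resp.\ $\nabla$'s) therefore implies it preserves the generating collections defining both ${}^pD^{\le 0}$ and ${}^pD^{\ge 0}$; this gives t-exactness. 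The $\fC$-statement then follows because the autoequivalence $({-})^\sigma$ sends the canonical nonzero morphism $\Delta_\lambda \to \nabla_\lambda$ to a nonzero morphism $\Delta_{-w_0\lambda} \to \nabla_{-w_0\lambda}$, which by highest-weight theory agrees up to scalar with the canonical one, and whose image is $\fC_{-w_0\lambda}$.

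To prove the Andersen--Jantzen identity, I would start from \eqref{eqn:aj-sheaf} and exploit that $\sigma$ exchanges $B$ and $B^+$. The key input is a natural ``transfer'' isomorphism
\[
(R\Ind_B^G W)^\sigma \cong R\Ind_{B^+}^G W^\sigma
\]
for any $B$-module $W$, where on the right $W^\sigma$ denotes $W$ regarded as a $B^+$-module via $\sigma|_{B^+}: B^+ \to B$. Applying this to $W = \Sym(\fu^*) \otimes \bk_\lambda$: the character factor twists to $\bk_{-\lambda}$ (since $\sigma|_T$ is inversion), while for the symmetric-algebra factor the $G$-equivariant isomorphism $\fg^\sigma \cong \fg$ of \eqref{eqn:fg-twist}, whose underlying linear map is (a nonzero scalar multiple of) $d\sigma$, restricts to a $B^+$-equivariant isomorphism $(\fu^*)^\sigma \cong (\fu^+)^*$; this uses that $\sigma$ exchanges $U$ and $U^+$, hence $d\sigma(\fu) = \fu^+$. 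Combining these and comparing with \eqref{eqn:aj-opp} evaluated at the weight $-w_0\lambda$ yields the desired identity, since $w_0 \cdot (-w_0\lambda) = -\lambda$.

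The main obstacle will be making the transfer isomorphism $(R\Ind_B^G W)^\sigma \cong R\Ind_{B^+}^G W^\sigma$ rigorous as an isomorphism of $(G\times\Gm)$-equivariant $\bk[\cN]$-modules: one must check that the module structures coming from \eqref{eqn:kn-ind} on one side and \eqref{eqn:kn-opp} on the other are intertwined by the ring isomorphism \eqref{eqn:kn-twist}. The $\Gm$-equivariance is automatic (the twist involves only the $G$-factor), and the rest is functorial bookkeeping about induction under a group automorphism, but it must be handled with care so that the final identification lives in $\Db\Cohmix(\cN)$ rather than merely in $\Db\Coh^G(\cN)$.
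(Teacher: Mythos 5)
Your proposal is correct and follows essentially the same route as the paper: reduce everything to $A_\lambda^\sigma \cong A_{-w_0\lambda}$, prove that via the transfer isomorphism $(R\Ind_B^G M)^\sigma \cong R\Ind_{B^+}^G(M^\sigma)$ together with $\bk_\lambda^\sigma \cong \bk_{-\lambda}$ and the restriction of \eqref{eqn:fg-twist} identifying $\fu^\sigma$ with $\fu^+$, then compare with \eqref{eqn:aj-opp} and check the $\bk[\cN]$-module structure using \eqref{eqn:kn-ind}, \eqref{eqn:kn-opp}, \eqref{eqn:kn-twist}. The deductions of $\delta_\lambda = \delta_{-w_0\lambda}$, t-exactness from preservation of the generating families, and $\fC_\lambda^\sigma \cong \fC_{-w_0\lambda}$ from the image of the canonical map are exactly the paper's (slightly terser) argument.
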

\begin{proof}
We will prove below that
\begin{equation}\label{eqn:aj-twist}
A_\lambda^\sigma \cong A_{-w_0\lambda}.
\end{equation}
Since $\delta_\lambda = \delta_{-w_0\lambda}$ (in the notation of~\eqref{eqn:std-defn}), this immediately implies the formulas for $\Delta_\lambda^\sigma$ and $\nabla_\lambda^\sigma$.  The t-exactness of $\sigma$ follows, as well as the formula for $\fC_\lambda^\sigma$.

Let us prove~\eqref{eqn:aj-twist}. For any $B$-representation $M$, let $M^\sigma$ denote the $B^+$-rep\-resentation given by composing with $\sigma|_{B^+}: B^+ \to B$.  It is clear that
\[
(R\Ind_B^G M)^\sigma \cong R\Ind_{B^+}^G (M^\sigma).
\]
Note that if we regard~\eqref{eqn:kn-twist} as an isomorphism of $B^+$-representations, it restricts to an isomorphism $\fu^\sigma \cong \fu^+$.  Using~\eqref{eqn:aj-sheaf} and~\eqref{eqn:aj-opp}, we find that
\begin{multline*}
(A_\lambda)^\sigma = (R\Ind_B^G (\Sym(\fu^*) \otimes \bk_\lambda))^\sigma
\cong R\Ind_{B^+}^G (\Sym(\fu^*)^\sigma \otimes \bk_\lambda^\sigma) \\
\cong R\Ind_{B^+}^G (\Sym((\fu^+)^*) \otimes \bk_{-\lambda})
\cong A_{-w_0\lambda},
\end{multline*}
as desired.  The fact that this is an isomorphism of complexes of $\bk[\cN]$-modules (and not just of $G$-representations) follows by combining~\eqref{eqn:kn-ind}, \eqref{eqn:kn-opp}, and~\eqref{eqn:kn-twist}.
\end{proof}

The following consequence is well known in characteristic $0$ (cf.~\cite[\S12]{lus:bekt}), but we were unable to find a reference in positive characteristic.

\begin{cor}\label{cor:sigma-nilp}
The automorphism $\sigma: \fg \to \fg$ induced by $\sigma: G \to G$ preserves each nilpotent orbit.
\end{cor}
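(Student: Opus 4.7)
The plan is to deduce the statement from Proposition~\ref{prop:pcoh-twist} together with the Lusztig--Vogan bijection.

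First, I would note that the functor $({-})^\sigma$ on $\Db\Cohmix(\cN)$ agrees with pullback along an automorphism $\sigma_\cN : \cN \to \cN$ obtained by restricting the chosen isomorphism~\eqref{eqn:fg-twist}. A priori this $\sigma_\cN$ may differ from the automorphism of $\cN$ induced by $d\sigma$, but only by a $G$-equivariant automorphism of $\fg$, which by Schur's lemma (and irreducibility of $\fg$) is a scalar; such a scalar acts on $\cN$ via the $\Gm$-action already considered in Section~\ref{sec:prelim}, and that action preserves each nilpotent orbit. Hence it suffices to prove that $\sigma_\cN$ preserves each nilpotent orbit. From this identification, for any coherent sheaf $\cF$ on $\cN$ the support of $\cF^\sigma$ equals $\sigma_\cN^{-1}(\mathrm{supp}\,\cF)$.

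Next, I would combine Proposition~\ref{prop:pcoh-twist} with~\eqref{eqn:ic-dual}. Writing $\ol{C_\lambda}$ for the support of $\fC_\lambda$, the isomorphism $\fC_\lambda^\sigma \cong \fC_{-w_0\lambda}$ from the proposition and $\D_\cN(\fC_\lambda) \cong \fC_{-w_0\lambda}$ from~\eqref{eqn:ic-dual} together show, using that Serre--Grothendieck duality preserves supports, that $\sigma_\cN^{-1}(\ol{C_\lambda}) = \ol{C_{-w_0\lambda}} = \ol{C_\lambda}$ for every $\lambda \in \bX^+$. By the Lusztig--Vogan bijection, every nilpotent orbit $C$ arises as $C_\lambda$ for some $\lambda \in \bX^+$ (for instance, with $V$ the trivial representation of $G^{x_C}$), so $\sigma_\cN$ preserves $\ol{C}$ for every orbit $C$. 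Since $\sigma_\cN$ is an automorphism of $\cN$ that permutes nilpotent orbits and preserves their dimensions, $\sigma_\cN(C)$ is an orbit contained in $\ol{C}$ of the same dimension as $C$, forcing $\sigma_\cN(C) = C$.

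The main obstacle is the initial bookkeeping step of identifying the algebraic functor $({-})^\sigma$ with a genuine geometric pullback, so that support computations make sense and so that one is really proving the statement about the same $\sigma$ as in the corollary; once that is in hand, the rest is a routine assembly from Proposition~\ref{prop:pcoh-twist}, Serre--Grothendieck duality, and the Lusztig--Vogan bijection.
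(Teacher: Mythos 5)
Your proof is correct and takes essentially the same route as the paper's: twist a simple perverse-coherent sheaf whose support is $\ol{C}$, identify $\fC_\lambda^\sigma$ with $\D_\cN(\fC_\lambda)$ via Proposition~\ref{prop:pcoh-twist} and~\eqref{eqn:ic-dual}, and compare supports. The extra bookkeeping in your first paragraph (matching $({-})^\sigma$ with a geometric pullback up to a scalar, which preserves orbits) and the closing dimension argument are points the paper leaves implicit, but they do not change the substance of the argument.
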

\begin{proof}
Let $C$ be a nilpotent orbit.  Its image under $\sigma: \fg \to \fg$ is another nilpotent orbit $\sigma(C)$.  For any $\cF \in \Db\Cohmix(\cN)$, the support of $\cF^\sigma$ is obtained by applying $\sigma$ to the support of $\cF$.  Take $\cF$ to be a simple perverse-coherent sheaf $\fC_\lambda$ whose support is $\ol{C}$.  Then the support of $\fC_\lambda^\sigma$ is $\ol{\sigma(C)}$.  But Proposition~\ref{prop:pcoh-twist} and~\eqref{eqn:ic-dual} together tell us that $\fC_\lambda^\sigma \cong \D_\cN(\fC_\lambda)$.  Since $\D_\cN$ preserves supports, we conclude that $\sigma(C) = C$.
\end{proof}

\begin{lem}\label{lem:sigmac}
Let $C$ be a nilpotent orbit, and let $j: C \hookrightarrow \cN$ be the inclusion map.  There is an automorphism $\sigma_C: G^{x_C} \to G^{x_C}$ with the following properties:
\begin{enumerate}
\item It commutes with the action of $\Gm$ on $G^{x_C}$, so that there is an induced automorphism $\id \times \sigma_C: \Gm \ltimes G^{x_C} \to \Gm \ltimes G^{x_C}$.
\item For  $V \in \Rep(\Gm \ltimes G^{x_C})$, regarded as an object of $\Cohmix(C)$ via~\eqref{eqn:indequiv-orbit}, there is a natural isomorphism
\[
\cH^0(j_*(V^{\id \times \sigma_C})) \cong \cH^0((j_*V)^\sigma).
\]
\end{enumerate}
\end{lem}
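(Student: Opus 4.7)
The plan is to build $\sigma_C$ by composing $\sigma$ with conjugation by a carefully chosen correction element $g_0 \in G$, so that $g_0$ carries $\sigma(x_C)$ back to $x_C$ and moreover intertwines $\sigma \circ \phi_{x_C}$ with $\phi_{x_C}$. By Corollary~\ref{cor:sigma-nilp}, $\sigma(C) = C$, so we may pick $g_0 \in G$ with $\Ad(g_0)(\sigma(x_C)) = x_C$, and set $\sigma_C(h) = g_0 \sigma(h) g_0^{-1}$. Since $\sigma$ carries $G^{x_C}$ to $G^{\sigma(x_C)}$ (because $\Ad(\sigma(h))(\sigma(x_C)) = \sigma(\Ad(h)(x_C)) = \sigma(x_C)$) and $\Int_{g_0}$ carries the latter back to $G^{x_C}$, the map $\sigma_C$ is an automorphism of $G^{x_C}$.

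For part~(1), I would refine the choice of $g_0$ using the uniqueness of associated cocharacters. Applying $\sigma$ to the identity $\Ad(\phi_{x_C}(z))(x_C) = z^2 x_C$ gives $\Ad(\sigma(\phi_{x_C}(z)))(\sigma(x_C)) = z^2 \sigma(x_C)$; moreover, since $\sigma$ is a group automorphism it preserves the parabolic/Levi data built into Jantzen's definition~\cite[Definition~5.3]{jantzen-nilp} of an associated cocharacter. Hence $\sigma \circ \phi_{x_C}$ is an associated cocharacter for $\sigma(x_C)$, and $\Int_{g_0}\circ \sigma\circ \phi_{x_C}$ is one for $x_C$. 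By the uniqueness of associated cocharacters up to $G^{x_C}$-conjugacy (see~\cite[Proposition~5.9]{jantzen-nilp}), there exists $h \in G^{x_C}$ with $\Int_h \circ \Int_{g_0}\circ \sigma\circ \phi_{x_C} = \phi_{x_C}$. Replacing $g_0$ by $hg_0$---which still satisfies $\Ad(hg_0)(\sigma(x_C)) = x_C$ because $h \in G^{x_C}$---yields the identity $g_0 \sigma(\phi_{x_C}(z)) g_0^{-1} = \phi_{x_C}(z)$ for all $z \in \Gm$, from which a direct calculation shows $\sigma_C(z \cdot h) = z \cdot \sigma_C(h)$.

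For part~(2), I would argue that the twist $(-)^\sigma$ on $\Db\Cohmix(\cN)$ arises from the compatible pair of automorphisms $\sigma: G \to G$ and $\sigma: \cN \to \cN$, and that $\sigma$ preserves $C$ (Corollary~\ref{cor:sigma-nilp}). The commuting square
\[
\begin{tikzcd}
C \arrow[r, "j"] \arrow[d, "\sigma|_C"'] & \cN \arrow[d, "\sigma"] \\
C \arrow[r, "j"'] & \cN
\end{tikzcd}
\]
furnishes a canonical isomorphism $(j_*V)^\sigma \cong j_*(V^{\sigma|_C})$, and it remains to identify the twist by $\sigma|_C$ on $\Cohmix(C)$ with $V \mapsto V^{\id \times \sigma_C}$ on $\Rep(\Gm \ltimes G^{x_C})$ under the equivalence~\eqref{eqn:indequiv-orbit}. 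This identification is made by tracking $\sigma|_C$ through $C \cong (G\times\Gm)/(G\times\Gm)^{x_C}$: it is realised by the map $(g, z) \mapsto (\sigma(g) g_0^{-1}, z)$, which descends to the quotient precisely because $\sigma_C$ preserves $G^{x_C}$; passing through the associated-bundle construction converts the twist by $\sigma|_C$ into the twist of the fibre $V$ by $\id \times \sigma_C$. Taking $\cH^0$ then yields the stated natural isomorphism. The main obstacle is part~(1), which hinges on Jantzen's cocharacter-uniqueness result and on verifying that $\sigma$ respects the parabolic-Levi data characterising associated cocharacters; part~(2) is essentially formal once $\sigma_C$ has been correctly pinned down.
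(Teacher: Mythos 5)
Your construction of $\sigma_C$ and your proof of part~(1) are exactly the paper's: correct $\sigma$ by an inner automorphism $\Int_{g_0}$ chosen via Corollary~\ref{cor:sigma-nilp}, observe that $\Int_{g_0}\circ\sigma\circ\phi_{x_C}$ is again an associated cocharacter for $x_C$, and use Jantzen's conjugacy statement for associated cocharacters (the paper cites \cite[Lemma~5.3]{jantzen-nilp} rather than Proposition~5.9, but that is the same uniqueness-up-to-$G^{x_C}$-conjugacy fact) to adjust $g_0$ so that the corrected automorphism fixes $\phi_{x_C}$ pointwise. The only place the routes diverge is part~(2). The paper works algebraically: it computes $\Gamma(j_*V)$ as $\Ind_{\Gm\ltimes G^{x_C}}^{G\times\Gm}V$, gets the isomorphism first as one of $(G\times\Gm)$-representations from the commutative square of group homomorphisms, and then spends some effort upgrading it to an isomorphism of $\bk[\ol{C}]$-modules, using that $\bk[\ol{C}]\to\Gamma(j_*\bk)$ is injective because $\ol{C}\smallsetminus C$ has codimension at least $2$. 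You instead invoke the commuting square of varieties $j\circ\sigma|_C=\sigma\circ j$ and base change for the (non-derived) pushforward, which packages the module-structure compatibility into functoriality of pullback along an isomorphism of varieties; this is legitimate and somewhat slicker, at the cost of having to verify carefully that the algebraically defined twist $({-})^\sigma$ (defined in the paper via the ring isomorphism $\bk[\cN]^\sigma\cong\bk[\cN]$) really is pullback along the automorphism of $\cN$ induced by the chosen isomorphism $\fg^\sigma\cong\fg$. Both arguments are sound.
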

\begin{proof}
By Corollary~\ref{cor:sigma-nilp}, there exists an $h \in G$ such that $\Ad(h)(x_C) = \sigma(x_C)$.  Let $\sigma' = \Int_{h^{-1}} \circ \sigma: G \to G$.  For any $G$-representation $(M, \varphi: G \to GL(M))$, one can define $M^{\sigma'}$ in the same way as we defined $M^\sigma$ earlier.  However, there is a canonical $G$-equivariant isomorphism
\begin{equation}\label{eqn:sigma-p}
M^\sigma \simto M^{\sigma'}
\qquad\text{given by}\qquad
v \mapsto \varphi(h^{-1})v.
\end{equation}
The same formula gives an isomorphism $\cF^\sigma \cong \cF^{\sigma'}$ for any $\cF \in \Db\Cohmix(\cN)$.

Now, consider the cocharacter $\phi': \Gm \to G$ given by $\phi' = \Int_{h^{-1}} \circ \sigma \circ \phi_{x_C}$.  A straightforward calculation shows that $\phi'$ is an associated cocharacter for $x_C$.  By~\cite[Lemma~5.3]{jantzen-nilp}, there is some element $h' \in G^{x_C}$ such that $\Int_{h'} \circ \phi' = \phi_{x_C}$.  By replacing $h$ by $h(h')^{-1}$, we may assume without loss of generality that $\phi' = \phi_{x_C}$.  In other words,
\begin{equation}\label{eqn:sigma-phi}
\sigma' \circ \phi_{x_C} = \phi_{x_C}.
\end{equation}
Since $\sigma'$ fixes $x_C$, it preserves $G^{x_C}$.  Let $\sigma_C = \sigma'|_{G^{x_C}}: G^{x_C} \to G^{x_C}$.  It follows from~\eqref{eqn:sigma-phi} that $\sigma_C$ commutes with the action of $\Gm$ on $G^{x_C}$, so there is a well-defined automorphism
\[
\id \times \sigma_C: \Gm \ltimes G^{x_C} \to \Gm \ltimes G^{x_C}.
\] 
Moreover, our set-up implies that the following diagram commutes:
\begin{equation}\label{eqn:sigma-commute}
\begin{tikzcd}[column sep=7.2em]
\Gm \ltimes G^{x_C} \ar[d, "\id \times \sigma_C"'] \ar[r, "{(z,g) \mapsto (\phi_{x_C}(z)g, z)}"]
 & G \times \Gm \ar[d, "\sigma' \times \id"] \\ 
\Gm \ltimes G^{x_C}  \ar[r, "{(z,g) \mapsto (\phi_{x_C}(z)g, z)}"] 
 & G \times \Gm 
\end{tikzcd}
\end{equation}

For the remainder of this proof, we use the notation $j_*$ to mean the \emph{nonderived} push-forward functor.  Thus, the statement we wish to prove is simply $j_*(V^{\sigma_C}) \cong (j_*V)^\sigma$.

Let us recall how to compute global sections under the equivalence~\eqref{eqn:indequiv-orbit}.  If $H$ is an algebraic group and $K \subset H$ is a closed subgroup, and if $M$ is a $K$-module, then the global sections of the corresponding vector bundle on the homogeneous space $H/K$ are given by $\Ind_K^H M$.  To apply this in our situation, we combine the equivalence~\eqref{eqn:indequiv-orbit} with the isomorphism~\eqref{eqn:graded-centralizer}, and we conclude that
\[
\Gamma(j_*V) \cong \Ind_{\Gm \ltimes G^{x_C}}^{G \times \Gm} V.
\]
It follows immediately from the commutativity of~\eqref{eqn:sigma-commute} (along with~\eqref{eqn:sigma-p}) that
\begin{equation}\label{eqn:sigma-gamma}
\Gamma(j_*(V^{\id \times \sigma_C})) \cong \Gamma((j_*V)^{\sigma'}) \cong \Gamma((j_*V)^\sigma).
\end{equation}
These are isomorphisms of $(G \times \Gm)$-representations.  To show that $j_*(V^{\id \times \sigma_C})$ and $(j_*V)^\sigma$ are isomorphic as sheaves on the affine variety $\ol{C}$, we must show that~\eqref{eqn:sigma-gamma} is an isomorphism of $\bk[\ol{C}]$-modules.

Note that $\Gamma(j_*\bk)$ naturally has the structure of a ring: it is the ring of regular functions on $C$.  As a special case of~\eqref{eqn:sigma-gamma}, we have ring isomorphisms
\begin{equation}\label{eqn:sigma-ring}
\Gamma(j_*\bk) = \Gamma(j_*(\bk^{\id \times \sigma_C})) \cong \Gamma((j_*\bk)^\sigma).
\end{equation}
Moreover, $\Gamma(j_*(V^{\id \times \sigma_C}))$ is naturally a $\Gamma(j_*(\bk^{\id \times \sigma_C}))$-module, and $\Gamma((j_*V)^\sigma)$ is naturally a $\Gamma((j_*\bk)^\sigma)$-module.  It is readily seen by unwinding the definitions that these module structures are compatible with the ring isomorphisms in~\eqref{eqn:sigma-ring}.  In other words,~\eqref{eqn:sigma-gamma} is an isomorphism of $\Gamma(j_*\bk)$-modules.

Since $\ol{C} \smallsetminus C$ has codimension at least $2$ in $\ol{C}$, the natural map $\bk[\ol{C}] \to \Gamma(j_*\bk)$ is injective.  (In fact, $\Gamma(j_*\bk)$ is the normalization of $\bk[\ol{C}]$.)  We conclude that~\eqref{eqn:sigma-gamma} is an isomorphism of $\bk[\ol{C}]$-modules, as desired.
\end{proof}

\begin{lem}\label{lem:ic-sigmac}
Let $\sigma_C$ be as in Lemma~\ref{lem:sigmac}.  For $V \in \Rep(\Gm \ltimes G^{x_C})$, there is a natural isomorphism $\IC(C,V^{\id \times \sigma_C}) \cong \IC(C,V)^\sigma$.
\end{lem}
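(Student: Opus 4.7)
The plan is to invoke the classification of simple perverse-coherent sheaves encoded in the bijection~\eqref{eqn:ic-classify}: $\IC(C', V')$ is characterized up to unique isomorphism as the simple perverse-coherent sheaf supported on $\overline{C'}$ whose restriction to $C'$ is $V'[-\tfrac{1}{2}\codim C']$. So to identify $\IC(C,V)^\sigma$ with $\IC(C, V^{\id \times \sigma_C})$, it suffices to verify that $\IC(C,V)^\sigma$ is simple, supported on $\overline{C}$, and restricts to $V^{\id \times \sigma_C}[-\tfrac{1}{2}\codim C]$ on $C$.

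Simplicity and support are immediate from earlier results. By Proposition~\ref{prop:pcoh-twist}, $(-)^\sigma$ is a t-exact autoequivalence of $\PCoh^\Gm(\cN)$ and therefore carries simples to simples. By Corollary~\ref{cor:sigma-nilp}, $\sigma$ preserves $C$, so the support of $\IC(C,V)^\sigma$ is still $\overline{C}$.

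The heart of the argument is the restriction computation. Since~\eqref{eqn:kn-twist} realizes $(-)^\sigma$ as pullback along an automorphism of $\cN$ that preserves $C$, the functor $(-)^\sigma$ commutes with the open restriction $j^{*}$, so
\[
\IC(C,V)^\sigma|_C \;\cong\; (\IC(C,V)|_C)^\sigma \;\cong\; V^\sigma[-\tfrac{1}{2}\codim C],
\]
where $V^\sigma$ now denotes the $\sigma$-twist of the sheaf $V$ on $C$. To identify $V^\sigma$ with $V^{\id \times \sigma_C}$ under the equivalence~\eqref{eqn:indequiv-orbit}, apply $j^*$ to the isomorphism $j_*(V^{\id \times \sigma_C}) \cong (j_*V)^\sigma$ of Lemma~\ref{lem:sigmac}; since $j$ is an open embedding, $j^* j_* = \id$, yielding $V^{\id \times \sigma_C} \cong V^\sigma$ in $\Cohmix(C)$. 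This gives the required restriction, and naturality in $V$ is inherited step by step from the construction.

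The substantive content has already been absorbed into Lemma~\ref{lem:sigmac}; the main remaining obstacle is simply to verify carefully that $(-)^\sigma$ on $\Db\Cohmix(\cN)$ commutes with restriction to the $\sigma$-stable locally closed subvariety $C$. This is a bookkeeping matter about the two descriptions of $(-)^\sigma$ (as twist of the $G$-action via~\eqref{eqn:kn-twist}, or equivalently as pullback along the induced automorphism of $\cN$), but it is the only place where one might stumble if the conventions are not set up cleanly.
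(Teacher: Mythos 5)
Your argument is correct in substance for irreducible $V$, and it follows the same overall strategy as the paper---identify $\IC(C,V)^\sigma$ by showing it is perverse-coherent and supported on $\ol{C}$, and then pin it down by its restriction to $C$, using Proposition~\ref{prop:pcoh-twist}, Corollary~\ref{cor:sigma-nilp}, and Lemma~\ref{lem:sigmac}---but you reach the restriction by a genuinely shorter route. The paper never commutes $({-})^\sigma$ past restriction to $C$ directly: it unpacks the construction of $\IC$ via the functor $J_{!*}$, writes $\IC(C,V) \cong \tau^+_{\ge 0}\bigl((j_*V)[-\frac{1}{2}\codim C]\bigr)$ with $j_*$ the nonderived pushforward, applies $({-})^\sigma$ to the resulting truncation triangle, and reads off the restriction of the third term from the middle term via Lemma~\ref{lem:sigmac}; the reason for staying with pushforwards is that Lemma~\ref{lem:sigmac} is proved on the affine variety $\ol{C}$, where the $\bk[\ol{C}]$-module structure can be checked on global sections. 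Your shortcut---commuting $({-})^\sigma$ with restriction to the $\sigma$-stable locally closed subvariety $C$ and then recovering the statement on $C$ by applying $j^*$ to Lemma~\ref{lem:sigmac} together with $j^*j_* \cong \id$---is legitimate, but the ``bookkeeping'' you defer is exactly what would need to be written out: that the twist defined via~\eqref{eqn:kn-twist} is pullback along the automorphism of $\cN$ induced by $\sigma$ (plus the twist of equivariant structure), hence commutes with $j^*$; alternatively, the identification $V^{\sigma}\cong V^{\id\times\sigma_C}$ on $C$ can be extracted directly from the diagram~\eqref{eqn:sigma-commute}.

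One caveat: the lemma is stated for arbitrary $V \in \Rep(\Gm \ltimes G^{x_C})$ (and Remark~\ref{rmk:ic-sigma} leans on this), whereas your identification step uses that $\IC(C,V)$ is simple and invokes the classification~\eqref{eqn:ic-classify} of simple objects; when $V$ is reducible, $\IC(C,V)$ is not simple, so ``simples go to simples'' does not apply verbatim (the paper's wording has the same slip, though its triangle setup identifies the middle term for general $V$). The fix is to replace simplicity by the characterizing property of the intermediate extension---supported on $\ol{C}$, prescribed restriction to $C$, and no nonzero subobject or quotient supported on $\ol{C}\smallsetminus C$---and to note that this property is preserved by the t-exact, support-preserving autoequivalence $({-})^\sigma$. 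With that adjustment (which also makes the asserted naturality in $V$ meaningful beyond the irreducible case), your argument goes through; for the application in Theorem~\ref{thm:graded-lv}, where $V$ is irreducible, your version already suffices.
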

\begin{proof}
This proof requires unpacking the construction of the $\IC$ functor.  Let $j: C \hookrightarrow \ol{C}$ be the inclusion map.  First,~\cite[Lemma~4.3]{ab:pcs} introduces a functor $J_{!*}: \Db\Cohmix(\ol{C}) \to \PCoh^\Gm(\ol{C})$ with various desirable properties (some of which will be recalled below).  Second, the discussion following~\cite[Lemma~4.3]{ab:pcs} shows that there is a unique functor $j_{!*}: \PCoh^\Gm(C) \to \PCoh^\Gm(\ol{C})$ characterized by the property that there is a natural isomorphism
\[
J_{!*}(\cF) \cong j_{!*}(\cF|_C)
\]
for $\cF \in \PCoh^\Gm(\ol{C})$.  Finally, the $\IC$ functor is defined by
\[
\textstyle
\IC(C,V) = j_{!*}(V[-\frac{1}{2}\codim C]).
\]

As in the proof of Lemma~\ref{lem:sigmac}, let us use $j_*$ to mean the nonderived pushforward.  From the remarks above, we have
\[
\textstyle
\IC(C,V) \cong J_{!*}((j_*V)[-\frac{1}{2}\codim C]).
\]
Here, we are implicitly using the fact that $j_*V$ is a coherent (rather than merely quasicoherent) sheaf.  This can be deduced from~\cite[Corollary~3.12]{ab:pcs}, using the fact that $\ol{C} \smallsetminus C$ has codimension at least $2$ in $\ol{C}$.

Next, the functor $J_{!*}$ is defined in~\cite{ab:pcs} as the composition of two truncation functors: $\tau^-_{\le 0} \circ \tau^+_{\ge 0}$.  (We refer to~\cite{ab:pcs} for the definitions of these functors.)  A routine truncation functor argument shows that the order of composition can be reversed: we may instead write $J_{!*} \cong \tau^+_{\ge 0} \circ \tau^-_{\le 0}$.  Finally, because $j_*V$ is a coherent sheaf (rather than a complex of sheaves), it follows from the definitions that
\[
\textstyle
(j_*V)[-\frac{1}{2}\codim C] \in {}^{p^-}\Db\Cohmix(\ol{C})^{\le 0}.
\]
We can therefore omit the $\tau^-_{\le 0}$: we have
\[
\textstyle
J_{!*}((j_*V)[-\frac{1}{2}\codim C]) \cong \tau^+_{\ge 0}((j_*V)[-\frac{1}{2}\codim C]).
\]
This object fits into a truncation distinguished triangle:
\[
\textstyle
\tau^+_{\le -1}((j_*V)[-\frac{1}{2}\codim C]) \to (j_*V)[-\frac{1}{2}\codim C] \to \IC(C,V) \to.
\]
The second and third terms become isomorphic after restriction to $C$, so the first term must be supported on $\ol{C} \smallsetminus C$.  Now apply $\sigma$ to this distinguished triangle, and use Lemma~\ref{lem:sigmac} to obtain
\begin{equation}\label{eqn:ic-trunc}
\textstyle
(\tau^+_{\le -1}((j_*V)[-\frac{1}{2}\codim C]))^\sigma \to (j_*(V^{\id \times \sigma_C}))[-\frac{1}{2}\codim C] \to \IC(C,V)^\sigma \to.
\end{equation}
By Proposition~\ref{prop:pcoh-twist}, the last term is still simple perverse-coherent sheaf.  It is still supported on $\ol{C}$, so we must have
\[
\IC(C,V)^\sigma \cong \IC(C,V')
\]
for some $V' \in \Rep(\Gm \ltimes G^{x_C})$.  The first term of~\eqref{eqn:ic-trunc} is still supported on $\ol{C} \smallsetminus C$, so to determine $V'$, we must examine the restriction to $C$ of the middle term of~\eqref{eqn:ic-trunc}.  This shows us that $V' \cong V^{\id \times \sigma_C}$, as desired.
\end{proof}

\begin{thm}\label{thm:graded-lv}
Let $\lambda \in \bX^+$, let $C$ be a nilpotent orbit, and let $V$ be an irreducible $G^{x_C}$-representation. Suppose $\lambda$ corresponds to $(C,V)$ under the Lusztig--Vogan bijection. Then $\fC_\lambda \cong \IC(C,V)\la \frac{1}{2}\codim C\ra$.
\end{thm}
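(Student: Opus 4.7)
The plan is to pin down the integer $n_{C,V}$ in~\eqref{eqn:graded-lv} by applying two different ``symmetries'' to the identity $\fC_\lambda \cong \IC(C,V)\la n_{C,V}\ra$. Both symmetries send $\fC_\lambda$ to $\fC_{-w_0\lambda}$, but they act differently on the grading shift on the right-hand side; equating the two resulting expressions for $\fC_{-w_0\lambda}$ should force the value of $n_{C,V}$.

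First I would apply Serre--Grothendieck duality $\D_\cN$. By~\eqref{eqn:ic-dual}, $\D_\cN(\fC_\lambda) \cong \fC_{-w_0\lambda}$, and by Lemma~\ref{lem:ic-dual-calc},
\[
\fC_{-w_0\lambda} \cong \IC(C,V^*)\la \codim C - n_{C,V}\ra.
\]
Next I would apply the opposition twist $({-})^\sigma$. Proposition~\ref{prop:pcoh-twist} gives $\fC_\lambda^\sigma \cong \fC_{-w_0\lambda}$, while Lemma~\ref{lem:ic-sigmac} (combined with the observation that $\la n_{C,V}\ra$ commutes with $({-})^\sigma$, since the grading shift is a $\Gm$-twist and $\sigma$ leaves $\Gm$ untouched) yields
\[
\fC_{-w_0\lambda} \cong \IC(C,V^{\sigma_C})\la n_{C,V}\ra,
\]
where $V^{\sigma_C}$ denotes the irreducible $G^{x_C}$-representation obtained by precomposing $V$ with $\sigma_C$ (regarded again as a $(\Gm \ltimes G^{x_C})$-representation with trivial $\Gm$-action).

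Equating the two expressions for $\fC_{-w_0\lambda}$ and invoking the bijective classification~\eqref{eqn:ic-classify}, the underlying $G^{x_C}$-representation and the grading shift on the two sides must each agree. The shift condition $\codim C - n_{C,V} = n_{C,V}$ immediately yields $n_{C,V} = \tfrac{1}{2}\codim C$ and completes the proof. (As a bonus, matching representations gives $V^* \cong V^{\sigma_C}$, confirming that $\sigma_C$ genuinely behaves like an opposition on the reductive quotient of $G^{x_C}$; this part is not needed for the theorem.)

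The argument itself is a short ``double symmetry'' calculation, so the main obstacle is entirely upstream, in the preceding lemmas. Concretely, the hard step is the construction of $\sigma_C$ in Lemma~\ref{lem:sigmac}---choosing $h \in G$ with $\Ad(h)(x_C) = \sigma(x_C)$, adjusting it within $G^{x_C}$ so that the associated cocharacter $\phi_{x_C}$ is preserved, and then verifying that the resulting stabilizer-level involution intertwines $j_*$ with $({-})^\sigma$ on global sections---together with the truncation argument in Lemma~\ref{lem:ic-sigmac} showing that this compatibility descends to the $\IC$ functor. Once those ingredients are in place, only the bookkeeping of grading shifts remains.
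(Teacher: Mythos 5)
Your proposal is correct and follows essentially the same route as the paper: the paper's proof also combines Serre--Grothendieck duality (via \eqref{eqn:ic-dual} and Lemma~\ref{lem:ic-dual-calc}) with the opposition twist (via Proposition~\ref{prop:pcoh-twist} and Lemma~\ref{lem:ic-sigmac}) to obtain $V^{\id\times\sigma_C} \cong V^*\la \codim C - 2n\ra$, and then concludes $\codim C - 2n = 0$ from the triviality of the $\Gm$-action on the left-hand side. Your ``compute $\fC_{-w_0\lambda}$ two ways and match shifts via \eqref{eqn:ic-classify}'' is just a reordering of that same calculation, and your assessment that the real work lies in Lemmas~\ref{lem:sigmac} and~\ref{lem:ic-sigmac} matches the structure of the paper.
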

\begin{proof}
For brevity, let $n = n_{C,V}$ be the integer such that $\fC_\lambda \cong \IC(C,V)\la n\ra$, or $\fC_\lambda\la -n\ra \cong \IC(C,V)$.  Using Proposition~\ref{prop:pcoh-twist} with~\eqref{eqn:ic-dual} and Lemma~\ref{lem:ic-dual-calc}, we see that
\begin{multline*}
\IC(C,V)^\sigma \cong \fC_\lambda^\sigma\la -n \ra \cong \D(\fC_\lambda)\la -n\ra \\
\cong \D(\IC(C,V)\la n\ra)\la -n\ra
\cong \IC(C,V^*)\la \codim C -2n\ra.
\end{multline*}
Comparing this with Lemma~\ref{lem:ic-sigmac}, we find that $V^{\id \times \sigma_C} \cong V^*\la \codim C - 2n\ra$.  Since the $\Gm$-action on $V^{\id \times \sigma_C}$ is trivial, we deduce that $\codim C - 2n = 0$, and hence that $n = \frac{1}{2}\codim C$.
\end{proof}

\begin{rmk}\label{rmk:ic-sigma}
In the proof of Theorem~\ref{thm:graded-lv}, we saw that if $V$ is an irreducible $G^{x_C}$-representation, then $\IC(C,V)^\sigma \cong \IC(C,V^*)$.  However, in general, this does not hold if $V$ is reducible.
\end{rmk}

\section{Conventions for \texorpdfstring{$PGL_3$}{PGL3}}\label{sec:notation}

From now on, we assume that $G = PGL_3$, and that the characteristic of $\bk$ is not $2$ or $3$ (so that (H1)--(H3) hold). Let $T \subset G$ be the maximal torus consisting of diagonal matrices. We identify the weight lattice $\bX$ with $\{ (a,b,c) \in \Z^3 \mid a+ b+c = 0 \}$ in the usual way: explicitly, the character $\lambda = (a,b,c)$ is given by
\[
\lambda \left(
\begin{psmallmatrix}
t_1 && \\ & t_2 & \\ && t_3
\end{psmallmatrix} \right)
 = t_1^a t_2^b t_3^c.
\]
Let $B \subset G$ be the Borel subgroup consisting of lower-triangular matrices. Then the set of dominant weights is
\[
\bX^+ = \{ (a,b,c) \mid \text{$a + b + c = 0$ and $a \ge b \ge c$} \}.
\]
Let $W$ be the Weyl group of $G$.  For $\lambda \in \bX^+$, let $\delta_\lambda = \min \{ \ell(w) \mid w\lambda \in -\bX^+ \}$.  

We label the three $G$-orbits in $\cN$ by partitions of $3$: $C_{[3]}$, $C_{[2,1]}$, and $C_{[1,1,1]}$.  They satisfy the following closure relations:
\[
\{0\} = C_{[1,1,1]} \subset \ol{C_{[2,1]}} \subset \ol{C_{[3]}}.
\]
These orbits have dimensions $0$, $4$, and $6$, respectively.  For each partition $\rd \vdash 3$, we choose a representative $x_\rd \in C_\rd$ and an associated cocharacter $\phi_\rd: \Gm \to G$ as shown below.  This table also shows the groups $G^\rd_\red$, defined as in~\eqref{eqn:gxred-defn}.
\begin{align*}
x_{[1,1,1]} &= 0 &
  \phi_{[1,1,1]}(z) &= 1 &
  G^{[1,1,1]}_\red &= G \\
x_{[2,1]} &= \begin{psmallmatrix} 0 & 0 & 0 \\ 0 & 0 & 0 \\ 1 & 0 & 0 \end{psmallmatrix} &
  \phi_{[2,1]}(z) &= \begin{psmallmatrix} z^{-1} & &  \\  & 1 &  \\  & & z \end{psmallmatrix} &
  G^{[2,1]}_\red &= \left\{ \begin{psmallmatrix} 1 & &  \\  & t &  \\  & & 1 \end{psmallmatrix} \right\} \cong \Gm \\
x_{[3]} &= \begin{psmallmatrix} 0 & 0 & 0 \\ 1 & 0 & 0 \\ 0 & 1 & 0 \end{psmallmatrix} &
  \phi_{[3]}(z) &= \begin{psmallmatrix} z^{-2} & &  \\  & 1 &  \\  & & z^2 \end{psmallmatrix} &
  G^{[3]}_\red &= \{1\}
\end{align*}
We can likewise consider the group $(G \times \Gm)^\rd_\red$, defined to be the stabilizer in $G \times \Gm$ of $\phi_\rd$, respectively.  This group is a Levi factor of $(G \times \Gm)^\rd$.  Following~\eqref{eqn:graded-centralizer}, there is an isomorphism
\[
\Gm \times G_{\red}^\rd \simto (G \times \Gm)^\rd_{\red}
\qquad\text{given by}\qquad
(z,g) \mapsto (\phi_{\rd}(z)g, z).
\]

The Lusztig--Vogan bijection for $GL_n$ has been determined in~\cite{achar-phd, achar-Kth}.  (Those sources assume that $\bk = \C$, but by~\cite{ahr2}, these results hold in positive characteristic as well.)  By restricting to appropriate subsets on both sides, one obtains the Lusztig--Vogan bijection for $PGL_n$.  The resulting bijection for $PGL_3$ is recorded in Table~\ref{tab:lv}.

\begin{table}
\begin{center}
 \begin{tabular}{||c |c |c | c||} 
 \hline
 $\rd \vdash 3$  & $G^{\rd}_{\red}$ & $\irr \in \Irr(G^{\rd})$ & $\lambda  \in \bX^+$  \\ [0.5ex] 
 \hline\hline
 $[3]$ & $\{\id\}$ & $\bk_{\mathrm{triv}}$ & $\lambda = (0,0,0)$ \\ 
 \hline
 $[2,1]$ & $\Gm$ & $\bk_{a}$, $a \in \Z$ & $\lambda = 
 						\begin{cases} 
						(x+1, x+1, -2x-2) & \text{if $a = 2x+1\geq 0$,} \\
						(x+1,x, -2x-1) & \text{if $a = 2x \geq 0$},\\
						(-2x-2, x, x) &\text{if $a = 2x+1 \leq 0$,}\\
						(-2x-1, x, x-1) &\text{if $a=2x \leq 0$}
 						\end{cases}$  \\
 \hline
  $[1,1,1]$ & $G$ & $\irr(a,b,c)$ & $\lambda = (a-2,b,c+2)$ \\ [1ex] 
 \hline
\end{tabular}
\end{center}
\caption{The Lusztig--Vogan bijection for $PGL_3$}\label{tab:lv}
\end{table}

\section{A resolution of the middle orbit}\label{sec:geometry}

In this section, we focus on the middle orbit $C_{[2,1]}$.  Let $\alpha_0 = (1,0,-1)$ be the highest root, and let
\[
\fu_{-\alpha_0} =  \bk x_{[2,1]} \subset \fg
\]
be the corresponding root space.  The group $B$ acts on $\fu_{-\alpha_0}$ by the adjoint action.  Form the vector bundle
\[
\cV = G \times^B \fu_{-\alpha_0},
\]
and let $\pi_\cV: \cV \to \ol{C_{[2,1]}}$ be the map $\pi_\cV(g,x) = \Ad(g)(x)$.  There is an obvious action of $G \times \Gm$ on $\cV$ (where $\Gm$ acts on $\fu_{-\alpha_0}$ as usual by $z \cdot x = z^{-2}x$), and the map $\pi_\cV$ is $(G \times \Gm)$-equivariant.

For $\lambda \in \bX$, let $\cO_{G/B}(\lambda)$ be the corresponding line bundle on $G/B$.  We regard this as a $(G \times \Gm)$-equivariant coherent sheaf by having $\Gm$ act trivially.  Let $p: \cV \to G/B$ be the projection map given by $p(g,x) = gB$.  We then set
\[
\cO_\cV(\lambda) = p^*\cO_{G/B}(\lambda) \in \Db\Cohmix(\cV).
\]

\begin{lem}\label{lem:rational-resolution}
The map $\pi_{\cV}: G \times^B \fu_{{-\alpha_0}} \longrightarrow  \ol{C}$ is a resolution of singularities of $\ol{C}$. 
\end{lem}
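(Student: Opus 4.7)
The plan is to verify the three defining conditions of a resolution of singularities for $\pi_{\cV}$: smoothness of the source, properness, and birationality. Smoothness of $\cV$ is automatic because it is the total space of a line bundle over $G/B$, of dimension $\dim G/B + 1 = 4 = \dim \ol{C_{[2,1]}}$. For properness, I would note that $-\alpha_0$ being the lowest root makes $\fu_{-\alpha_0}$ a $B$-stable closed subspace of $\fu$: under the adjoint action any bracket $[\fg_\beta, \fg_{-\alpha_0}]$ with $\beta$ a root of $B$ vanishes, since $\beta - \alpha_0$ fails to be a root. Hence $\cV$ embeds as a closed subvariety of $\tcN = G \times^B \fu$, and $\pi_{\cV}$ is the restriction of the proper Springer map $\pi : \tcN \to \cN$, hence itself proper.

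Surjectivity onto $\ol{C_{[2,1]}}$ is then a quick set-theoretic check: the image of $\pi_{\cV}$ is $G \cdot \fu_{-\alpha_0}$, which consists of $0$ together with $G$-conjugates of nonzero scalar multiples of $x_{[2,1]}$, and therefore equals $\{0\} \cup C_{[2,1]} = \ol{C_{[2,1]}}$ by the closure relations recorded in Section~\ref{sec:notation}.

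The main work is birationality. Since source and target are irreducible of the same dimension, it suffices to show that the fiber $\pi_{\cV}^{-1}(x_{[2,1]})$ consists of a single point. Unwinding definitions, a point of this fiber is a class $[g, c\,x_{[2,1]}]$ with $c \in \bk^\times$ and $\Ad(g)(x_{[2,1]}) = c^{-1} x_{[2,1]}$, so $g$ must preserve the line $\bk x_{[2,1]} \subset \fg$. The one genuinely nontrivial step is to identify this line stabilizer as $B$: since $x_{[2,1]} = e_{-\alpha_0}$ is a lowest-weight vector of the irreducible adjoint representation of $PGL_3$, its projective stabilizer is a standard parabolic containing $B$, and because $-\alpha_0 = -\omega_1 - \omega_2$ is supported on both simple roots, this parabolic collapses to $B$ itself. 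Therefore $g \in B$ and $[g, c\,x_{[2,1]}]$ coincides with the canonical point $[e, x_{[2,1]}]$. If this representation-theoretic shortcut felt too brisk in writing, a direct matrix calculation in $PGL_3$ would reach the same conclusion in a few lines.
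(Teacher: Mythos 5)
Your smoothness, properness, and surjectivity steps are fine: the observation that $\fu_{-\alpha_0}=\fg_{-\alpha_0}$ is a $B$-submodule of $\fu$ (since $-\alpha_0$ is the lowest root), so that $\cV$ sits as a closed subvariety of $\tcN$ and $\pi_\cV$ inherits properness from the Springer map, is a clean route, and the identification of the stabilizer of the line $\bk x_{[2,1]}$ with $B$ is correct. The gap is in the sentence ``since source and target are irreducible of the same dimension, it suffices to show that the fiber over $x_{[2,1]}$ consists of a single point.'' That reduction is valid in characteristic $0$, but the lemma is needed (and the paper works) for $\operatorname{char}\bk = p>3$, where counting points in fibers over closed points of the open orbit only computes the \emph{separable} degree of $\bk(\cV)/\bk(\ol{C_{[2,1]}})$. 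A proper morphism of irreducible $4$-dimensional varieties can be bijective over a dense open subset and still be purely inseparable of degree $p$, hence not birational (Frobenius-type examples). What your fiber computation actually proves is that the reduced stabilizer of $x_{[2,1]}$ lies in $B$, so that $\pi_\cV^{-1}(C_{[2,1]})\to C_{[2,1]}$ is bijective; you still owe an argument that this map is separable.

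The missing input is exactly the separability package guaranteed by (H1)--(H3) and recalled in Section~\ref{sec:prelim}. For instance: the orbit map $G\to C_{[2,1]}$, $g\mapsto \Ad(g)(x_{[2,1]})$, factors through $\pi_\cV^{-1}(C_{[2,1]})$ via $g\mapsto [g,x_{[2,1]}]$, and under (H1)--(H3) it is separable (this is the statement $T_{x_C}C\cong[x_C,\fg]$, i.e.\ smoothness of centralizers); hence $\bk(\pi_\cV^{-1}(C_{[2,1]}))/\bk(C_{[2,1]})$ is separable, and together with your bijectivity the degree is $1$, giving birationality. Equivalently, one can check that the differential of $\pi_\cV$ at $[e,x_{[2,1]}]$ has image $[\fg,x_{[2,1]}]+\bk x_{[2,1]}=[\fg,x_{[2,1]}]=T_{x_{[2,1]}}C_{[2,1]}$ (note $[d\phi_{[2,1]}(1),x_{[2,1]}]=2x_{[2,1]}$), so the map is \'etale at that point. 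This is precisely what the paper outsources to~\cite[Lemma~8.8 and Remark~8.8]{jantzen-nilp}, which is a characteristic-free criterion for the collapsing $G\times^B M\to G\cdot M$ to restrict to an isomorphism over the open orbit; your argument is more self-contained than the paper's citation, but it is incomplete in positive characteristic until this separability point is added.
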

\begin{proof}
We observe immediately that $G \times^B \fu_{{-\alpha_0}}$ is smooth and irreducible, and that
$\dim G \times^B \fu_{{-\alpha_0}} = 4 = \dim \ol{C}$.   To 
show that it is birational, we now only have to verify that 
\[
\pi_{\cV}|_{\tilde{C}}: \tilde{C} \longrightarrow C
\]
is an isomorphism, where $\tilde{C} = \pi_{\cV}^{-1}(C)$. This follows from \cite[Lemma 8.8 and Remark 8.8]{jantzen-nilp}, where 
we set $V = \fg$, $M= \fu_{-\alpha_0}$ and $P=B$, so that $\mathcal{P}_M \cong G \times^B \fu_{{-\alpha_0}}$. 
\end{proof}

\begin{lem}\label{lem:dualzing}
The canonical bundle of $\cV$ is given by $\omega_\cV \cong \cO_\cV({-\alpha_0})\la -2\ra$.
\end{lem}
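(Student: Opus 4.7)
The plan is to compute $\omega_\cV$ by exploiting the fact that $p \colon \cV \to G/B$ is a $1$-dimensional vector bundle map, so that $\cV$ is smooth and the relative cotangent exact sequence
\[
0 \to p^*\Omega^1_{G/B} \to \Omega^1_\cV \to \Omega^1_{\cV/(G/B)} \to 0
\]
is available. Taking top exterior powers (and using the fact that $\Omega^1_{\cV/(G/B)}$ is a line bundle) yields $\omega_\cV \cong p^*\omega_{G/B} \otimes \Omega^1_{\cV/(G/B)}$, which reduces the problem to computing the two factors on the right.

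For the first factor, $\omega_{G/B} \cong \cO_{G/B}(-2\rho)$ by the standard computation that the tangent space at $eB$ has $T$-weights equal to the positive roots (with trivial $\Gm$-action, since $\Gm$ acts trivially on $G/B$). For $G = PGL_3$ the half-sum of positive roots $\rho = \frac{1}{2}(\alpha_1 + \alpha_2 + \alpha_0)$ is just $\alpha_0$, so $p^*\omega_{G/B} \cong \cO_\cV(-2\alpha_0)$.

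For the second factor, recall that for any vector bundle $\pi \colon E \to X$ with sheaf of sections $\mathcal{E}$, the relative cotangent sheaf satisfies $\Omega^1_{E/X} \cong \pi^*\mathcal{E}^*$; this is an isomorphism of $(G \times \Gm)$-equivariant sheaves since the construction is canonical. The sheaf of sections of $\cV$ corresponds via $G \times^B(-)$ to the $(B \times \Gm)$-module $\fu_{-\alpha_0}$, which has $B$-weight $-\alpha_0$ and $\Gm$-weight $-2$; hence it is $\cO_{G/B}(-\alpha_0)\la 2\ra$. Dualizing and pulling back gives $\Omega^1_{\cV/(G/B)} \cong \cO_\cV(\alpha_0)\la -2\ra$.

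Putting the pieces together,
\[
\omega_\cV \cong \cO_\cV(-2\alpha_0) \otimes \cO_\cV(\alpha_0)\la -2\ra \cong \cO_\cV(-\alpha_0)\la -2\ra,
\]
as required. There is no deep obstacle; the only place to be careful is the bookkeeping of the $\Gm$-grading, since under the convention $V\la n\ra \cong V \otimes \bk_{-n}$ the operation of dualizing a $\Gm$-representation flips the sign of the shift, so the shift $\la 2\ra$ on the sheaf of sections becomes $\la -2\ra$ on the relative cotangent sheaf and ultimately on $\omega_\cV$.
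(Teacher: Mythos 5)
Your proof is correct and follows essentially the same route as the paper: both use the relative cotangent exact sequence for $p:\cV\to G/B$, identify $p^*\Omega_{G/B}$ with $\cO_\cV((\fg/\fb)^*)$ and $\Omega_{\cV/(G/B)}$ with $\cO_\cV(\fu_{-\alpha_0}^*)$, and then compute the $B$- and $\Gm$-weights of the top exterior power. The only cosmetic difference is that you factor the answer through $\omega_{G/B}\cong\cO_{G/B}(-2\rho)$ with $\rho=\alpha_0$, while the paper computes the weight of $\bigwedge^3(\fg/\fb)^*\otimes\fu_{-\alpha_0}^*$ directly; your $\Gm$-grading bookkeeping matches the paper's conventions.
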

\begin{proof}
We generalize the notation $\cO_{G/B}(\lambda)$ as follows: for any $(B \times \Gm)$-module $M$, let $\cO_{G/B}(M)$ denote the corresponding $(G \times \Gm)$-equivariant locally free sheaf on $G/B$, and let $\cO_\cV(M) = p^*\cO_{G/B}(M)$.

Using~\cite[Proposition II.8.11]{hartshorne} and the fact that $\cV$ and $G/B$ are smooth, we have a short exact sequence
\[
0 \longrightarrow p^*\Omega_{G/B} \longrightarrow \Omega_{\cV} \longrightarrow \Omega_{\cV/ (G/B)} \longrightarrow 0.
\]
These three sheaves are locally free of ranks $3$, $4$, and $1$, respectively. By applying the reasoning from \cite[II.4.1]{jantzen} to $\cV$, we have $ p^*\Omega_{G/B} = \cO_{\cV}((\fg/\fb)^*)$ and $\Omega_{\cV/ (G/B)} = \cO_{\cV}(\fu_{{-\alpha_0}}^*)$.  From these observations, we have
\begin{multline*}
\textstyle
\omega_\cV = \bigwedge^4 \Omega_\cV \cong \bigwedge^3(p^*\Omega_{G/B}) \otimes \bigwedge^1 \Omega_{\cV/(G/B)} \\
\textstyle
\cong \bigwedge^3 \cO_\cV((\fg/\fb)^*) \otimes \cO_\cV(\fu_{-\alpha_0}^*)
\cong \cO_\cV(\bigwedge^3(\fg/\fb)^* \otimes \fu_{-\alpha_0}^*).
\end{multline*}
Now, $B$ acts on the $1$-dimensional representation $\bigwedge^3(\fg/\fb)^* \otimes \fu_{-\alpha_0}^*$ with weight ${-\alpha_0}$.  Next, $\Gm$ acts trivially on $G/B$, so for the purposes of this calculation, it also acts trivially on the cotangent space $(\fg/\fb)^*$.  It acts with weight $2$ on $\fu_{-\alpha_0}^*$, so $\omega_\cV \cong \cO_\cV({-\alpha_0})\la -2\ra$, as desired.
\end{proof}

\begin{lem}\label{lem:oc-shriek}
We have $\pi_\cV^!\cO_{\ol{C_{[2,1]}}} \cong \cO_\cV({-\alpha_0})\la 2\ra$.
\end{lem}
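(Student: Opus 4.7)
The plan is to derive the formula by combining Lemma~\ref{lem:dualzing} (which identifies $\omega_\cV$) with Proposition~\ref{prop:gorenstein-dc} (which identifies the dualizing complex of a Gorenstein orbit closure), via the functoriality $\pi_\cV^! \circ a^! \cong b^!$ for the structure maps $a : \ol{C_{[2,1]}} \to \Spec\bk$ and $b : \cV \to \Spec\bk$.

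First, since $\cV$ is a line bundle over the $3$-dimensional flag variety $G/B$, it is smooth of dimension $4$, so $b^!\cO_{\Spec\bk} \cong \omega_\cV[4]$; by Lemma~\ref{lem:dualzing}, this equals $\cO_\cV(-\alpha_0)\la -2\ra[4]$. Next, one verifies that $\ol{C_{[2,1]}}$ is Gorenstein, so that Proposition~\ref{prop:gorenstein-dc} yields $a^!\cO_{\Spec\bk} \cong \cO_{\ol{C_{[2,1]}}}[4]\la -4\ra$. Combining these,
\[
\pi_\cV^!\cO_{\ol{C_{[2,1]}}}[4]\la -4\ra \cong \pi_\cV^! a^!\cO_{\Spec\bk} \cong b^!\cO_{\Spec\bk} \cong \cO_\cV(-\alpha_0)\la -2\ra[4],
\]
and the desired isomorphism $\pi_\cV^!\cO_{\ol{C_{[2,1]}}} \cong \cO_\cV(-\alpha_0)\la 2\ra$ follows by cancelling the cohomological and internal shifts.

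The main obstacle is thus the Gorenstein property of $\ol{C_{[2,1]}}$. For $G = PGL_3$, this is the closure of the minimal nilpotent orbit in $\mathfrak{sl}_3$, and minimal nilpotent orbit closures in simple Lie algebras are classically known to have symplectic singularities, and in particular to be Gorenstein; one may simply invoke this general fact. If instead one wishes to stay self-contained, there are two back-up options. One could verify Gorenstein-ness by hand from an explicit description of the defining ideal of this $4$-dimensional variety in $\mathfrak{sl}_3$. Alternatively, one could start from the fact (Proposition~\ref{prop:gorenstein-dc}) that $(a^!\cO_{\Spec\bk})|_{C_{[2,1]}} \cong \cO_{C_{[2,1]}}[4]\la -4\ra$, and combine this with the known Cohen--Macaulay property of nilpotent orbit closures in type~$A$ and the fact that $\ol{C_{[2,1]}}\smallsetminus C_{[2,1]} = \{0\}$ has codimension~$4$, in order to deduce that the dualizing complex of $\ol{C_{[2,1]}}$ is concentrated in a single cohomological degree and a shift of a reflexive rank-one sheaf---hence a line bundle---which is then pinned down by its restriction to the open orbit.
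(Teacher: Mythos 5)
Your argument is correct and is essentially the paper's own proof: both compose $\pi_\cV^! a^! \cong a_\cV^!$ for the structure maps, use smoothness of $\cV$ together with Lemma~\ref{lem:dualzing} to identify $a_\cV^!\cO_{\Spec\bk} \cong \cO_\cV(-\alpha_0)\la -2\ra[4]$, and use the Gorenstein property of $\ol{C_{[2,1]}}$ via Proposition~\ref{prop:gorenstein-dc} to identify $a^!\cO_{\Spec\bk}$, then cancel the shifts. The only difference is bookkeeping on the Gorenstein input: the paper simply cites \cite[Theorem~5.3.2]{bk}, whereas you invoke the classical Gorenstein property of minimal nilpotent orbit closures (your reflexive-sheaf fallback would also need normality of $\ol{C_{[2,1]}}$ to conclude that the rank-one reflexive dualizing sheaf is actually a line bundle, but this is not needed for the main route).
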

\begin{proof}
Let $a: \ol{C_{[2,1]}} \to \Spec \bk$ and $a_\cV: \cV \to \Spec \bk$ be the structure maps.  Since $\ol{C_{[2,1]}}$ is Gorenstein~(by~\cite[Theorem~5.3.2]{bk}) and $\dim C_{[2,1]} = 4$, we can use Proposition~\ref{prop:gorenstein-dc} to calculate as follows:
\[
\pi_\cV^!\cO_{\ol{C_{[2,1]}}} \cong \pi_\cV^! a^!\cO_{\Spec \bk}[-4]\la 4\ra \cong a_\cV^!\cO_{\Spec \bk}[-4]\la 4\ra
\cong \omega_\cV\la 4\ra \cong \cO_\cV({-\alpha_0})\la 2\ra,
\]
as desired.
\end{proof}

Define the Serre--Grothendieck duality functor on $\cV$ by
\[
\D_\cV = R\shHom({-}, \cO_\cV({-\alpha_0})[-2]\la 4\ra).
\]
This choice satisfies
\[
\pi_{\cV*} \circ \D_\cV \cong \D_{\ol{C_{[2,1]}}} \circ \pi_{\cV*}.
\]
To see this, note that by Corollary~\ref{cor:orbit-dc}, the right-hand side is given by
\[
R\shHom(\pi_{\cV*}({-}), \cO_{\ol{C_{[2,1]}}}[-2]\la 2\ra) \cong \pi_{\cV*} R\shHom({-},\pi_\cV^!\cO_{\ol{C_{[2,1]}}}[-2]\la 2\ra),
\]
and then use Lemma~\ref{lem:oc-shriek}.

As an immediate consequence, we have the following

\begin{lem}\label{lem:V-properties}
For any $\lambda \in \bX$, we have
\begin{align*}
\D_\cV(\cO_\cV(\lambda)\la n\ra[k]) &\cong \cO_\cV(-\alpha_0-\lambda)\la 4-n\ra[-2-k], \\
\D_\cN(\pi_{\cV*}\cO_\cV(\lambda)\la n\ra[k]) &\cong \pi_{\cV*}\cO_\cV(-\alpha_0-\lambda)\la 4-n\ra[-2-k].
\end{align*}
\end{lem}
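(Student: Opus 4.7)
The lemma is a straightforward consequence of the definition of $\D_\cV$ together with the duality compatibility recorded just before the statement, so I would handle the two formulas in turn, treating the second as a corollary of the first.

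For the first formula, my plan is to unwind the definition
\[
\D_\cV(\cF) = R\shHom(\cF, \cO_\cV({-\alpha_0})[-2]\la 4\ra)
\]
and use the fact that $\cO_\cV(\lambda)$ is a line bundle. Specifically, $R\shHom(\cO_\cV(\lambda), \cM) \cong \cO_\cV(-\lambda) \otimes \cM$ for any $\cM$, which applied to $\cM = \cO_\cV(-\alpha_0)[-2]\la 4\ra$ yields $\cO_\cV(-\alpha_0-\lambda)[-2]\la 4\ra$. The two shifts then come in from the general rules that $R\shHom([k],-)$ inserts a $[-k]$ and that $R\shHom(V\la n\ra,-)$ inserts a $\la -n\ra$ (using $V\la n\ra \cong V\otimes \bk_{-n}$ as recalled in Section~\ref{sec:prelim}). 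Combining these with the intrinsic $[-2]\la 4\ra$ gives exactly $\cO_\cV(-\alpha_0-\lambda)\la 4-n\ra[-2-k]$, as desired.

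For the second formula, my plan is to push forward the first formula along $\pi_\cV$ and appeal to the compatibility
\[
\pi_{\cV*} \circ \D_\cV \cong \D_{\ol{C_{[2,1]}}} \circ \pi_{\cV*}
\]
established just before the statement of the lemma, followed by the identity~\eqref{eqn:dual-olc} (applied to $C = C_{[2,1]}$) to replace $\D_{\ol{C_{[2,1]}}}$ by $\D_\cN$ once we push forward under $i_{\ol{C_{[2,1]}}*}$. This reduces the claim to the first formula.

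There is no real obstacle here; the only thing to be careful about is keeping track of signs in the grading and cohomological shifts when moving them across $R\shHom$, and making sure the compatibility $\pi_{\cV*} \circ \D_\cV \cong \D_\cN \circ \pi_{\cV*}$ is applied correctly (i.e., after composing with the closed embedding $i_{\ol{C_{[2,1]}}*}$, which is implicit in the identification of $\pi_{\cV*}$ as landing in $\Db\Cohmix(\cN)$).
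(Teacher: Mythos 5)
Your proposal is correct and matches the paper's intent: the paper states this lemma as an immediate consequence of the definition $\D_\cV = R\shHom({-},\cO_\cV(-\alpha_0)[-2]\la 4\ra)$ and the compatibility $\pi_{\cV*}\circ\D_\cV \cong \D_{\ol{C_{[2,1]}}}\circ\pi_{\cV*}$ (together with~\eqref{eqn:dual-olc}), which is exactly the computation you spell out. Your bookkeeping of the shifts ($R\shHom$ of a line bundle, with $[k]\mapsto[-k]$ and $\la n\ra\mapsto\la -n\ra$) is accurate, so nothing is missing.
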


\section{Determination of the simple perverse-coherent sheaves}\label{sec:mainthm}

\newcommand{\ts}{\;\,}

In this section, we will work with representations of $G^{[2,1]}$ and $\Gm \ltimes G^{[2,1]}$.  Recall that $G^{[2,1]}_\red$ is isomorphic to $\Gm$.  Let $\bk_n$ denote the irreducible $G^{[2,1]}_\red$-representation of weight $n$.  Thus, a general irreducible $(\Gm \ltimes G^{[2,1]})$-representation can be written in the form
\[
\bk_n\la m\ra,
\]
where $n$ is the weight of the $G^{[2,1]}_\red$-action, and $-m$ is (as usual) the weight of the other copy of $\Gm$.

\begin{lem}\label{lem:wt-restrict}
Let $\lambda = (a,b,c) \in \bX$, and consider the line bundle $\cO_\cV(\lambda)$.  The object $(\pi_{\cV*}\cO_\cV(\lambda))|_{C_{[2,1]}}$ is the line bundle corresponding to the $(\Gm \ltimes G^{[2,1]})$-representation $\bk_b\la a-c\ra$.
\end{lem}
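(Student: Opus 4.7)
The plan is to identify $(\pi_{\cV*}\cO_\cV(\lambda))|_{C_{[2,1]}}$ as a $(G\times\Gm)$-equivariant line bundle on $C_{[2,1]}$, then to translate this via the equivalence~\eqref{eqn:indequiv-orbit} and the isomorphism~\eqref{eqn:graded-centralizer} into a $(\Gm\ltimes G^{[2,1]})$-representation by computing the fiber at a single distinguished point.

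First, set $\tilde C_{[2,1]} = \pi_\cV^{-1}(C_{[2,1]})$, which is a $(G\times\Gm)$-stable open subset of $\cV$. By Lemma~\ref{lem:rational-resolution}, $\pi_\cV|_{\tilde C_{[2,1]}} \colon \tilde C_{[2,1]} \to C_{[2,1]}$ is an isomorphism. Flat base change along the open immersion $C_{[2,1]}\hookrightarrow \ol{C_{[2,1]}}$ then gives
\[
(\pi_{\cV*}\cO_\cV(\lambda))|_{C_{[2,1]}}\cong (\pi_\cV|_{\tilde C_{[2,1]}})_*\bigl(\cO_\cV(\lambda)|_{\tilde C_{[2,1]}}\bigr),
\]
so the task reduces to computing the $(G\times\Gm)$-equivariant line bundle $\cO_\cV(\lambda)|_{\tilde C_{[2,1]}}$ on $\tilde C_{[2,1]} \cong C_{[2,1]}$.

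Next, choose the point $\tilde x = [e,x_{[2,1]}]\in \tilde C_{[2,1]}$, which maps to $x_{[2,1]}$ under $\pi_\cV$. A direct calculation from the defining relation $[g,x]\sim [gb,\Ad(b^{-1})x]$ and the $\Gm$-action $z\cdot[g,x]=[g,z^{-2}x]$ shows that the stabilizer of $\tilde x$ in $G\times\Gm$ consists of pairs $(b,z)$ with $b\in B$ and $\Ad(b)(x_{[2,1]})=z^2 x_{[2,1]}$; this coincides with $(G\times\Gm)^{x_{[2,1]}}$, and hence with $\Gm\ltimes G^{[2,1]}$ via~\eqref{eqn:graded-centralizer}. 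The fiber of $\cO_\cV(\lambda) = p^*\cO_{G/B}(\lambda)$ at $\tilde x$ is the fiber of $\cO_{G/B}(\lambda)$ at $eB$, which (with the conventions fixed in the proof of Lemma~\ref{lem:dualzing}) is the $1$-dimensional $B$-module $\bk_\lambda$ with trivial $\Gm$-action.

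It remains to read off the action of $\Gm\ltimes G^{[2,1]}$ through the isomorphism $(z,g)\mapsto(\phi_{[2,1]}(z)g,z)$. On the $G^{[2,1]}_\red$-factor, the element $g = \operatorname{diag}(1,t,1)$ maps to $(\operatorname{diag}(1,t,1),1)$ and acts on $\bk_\lambda$ by $\lambda(\operatorname{diag}(1,t,1)) = t^{b}$, giving weight $b$. On the auxiliary $\Gm$-factor, the element $z$ maps to $(\phi_{[2,1]}(z),z) = (\operatorname{diag}(z^{-1},1,z),z)$ and acts on $\bk_\lambda$ by $\lambda(\operatorname{diag}(z^{-1},1,z)) = z^{c-a}$, so the $\Gm$-weight is $c-a$, which is $\la a-c\ra$ in the conventions of Section~\ref{sec:prelim}. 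Combining these, the fiber representation is $\bk_b\la a-c\ra$, as claimed. There is no real obstacle beyond keeping the sign conventions for roots, for the $\Gm$-action on $\cN$, and for the grading shift $\la\cdot\ra$ consistent, all of which have already been pinned down in the preceding lemmas.
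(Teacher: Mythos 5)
Your proposal is correct and follows essentially the same route as the paper's (much terser) proof: identify $C_{[2,1]}$ with its preimage in $\cV$ via Lemma~\ref{lem:rational-resolution}, and then read off the fiber of $\cO_\cV(\lambda)$ at the base point as a representation of $\Gm \ltimes G^{[2,1]}$ through~\eqref{eqn:graded-centralizer}, getting weight $b$ for $G^{[2,1]}_\red$ and $\Gm$-weight $c-a = \langle \lambda, \phi_{[2,1]}\rangle$, i.e.\ $\la a-c\ra$. The extra steps you spell out (flat base change across the open immersion, and the identification of the stabilizer of $[e,x_{[2,1]}]$ with $(G\times\Gm)^{x_{[2,1]}}$, which also follows at once from the equivariance of the isomorphism $\pi_\cV^{-1}(C_{[2,1]})\simto C_{[2,1]}$) are correct and simply make explicit what the paper leaves implicit.
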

\begin{proof}
By Lemma~\ref{lem:rational-resolution}, we can identify $C_{[2,1]}$ with its preimage under $\pi_\cV$.  To describe $\cO_\cV(a,b,c)|_{C_{[2,1]}}$, we must compute the restriction of the $B$-representation of weight $(a,b,c)$ to $\Gm \times G^{[2,1]}_\red$ via~\eqref{eqn:graded-centralizer}.  It is clear that $G^{[2,1]}_\red$ acts with weight $b$.  The weight of the $\Gm$-action is given by pairing the cocharacter $\phi_{[2,1]} = (-1,0,1)$ with $(a,b,c)$ to obtain $c-a$.
\end{proof}

The following lemma describes the graded $G$-module structure of certain coherent sheaves on $\cN$.  In the tables, the column headings indicate the grading degree (i.e., the weight of the $\Gm$-action), and the entries are $G$-representations.  For the latter, we use the following notation from~\cite{jantzen}: for $\lambda \in \bX$ and $i \in \Z$, we write
\[
H^i(\lambda) = R^i \Ind_B^G \bk_\lambda.
\]

\begin{lem}\label{lem:piv-cohom}
Let $a \ge 0$, and let $\lambda_a = (a,a,-2a) \in \bX$.
\begin{enumerate}
\item We have $\cH^i(\pi_{\cV*}\cO_\cV(\lambda_a)) = 0$ unless $i = 0$.  As a graded $G$-module, the sheaf $\cH^0(\pi_{\cV*}\cO_\cV(\lambda_a))$ is given by
\[
\scalebox{0.9}{\hbox{$
\begin{array}{@{}c@{\ts}c@{\ts}c@{\ts}c@{\ts}c@{\ts}c@{\ts}c@{\ts}c@{\ts}c@{\ts}c@{\ts}c@{}}
& 0 & 2 & 4 & \cdots & 2r & \cdots  \\
\cH^0: &H^0(\lambda_a) & H^0(\lambda_a+\alpha_0) & H^0(\lambda_a+2\alpha_0) & \cdots & H^0(\lambda_a+r\alpha_0) & \cdots
\end{array}$}}
\]
\item We have $\cH^i(\pi_{\cV*}(\cO_\cV(-\alpha_0-\lambda_a)) = 0$ unless $0 \le i \le 2$.  As graded $G$-modules, the sheaves $\cH^i(\pi_{\cV*}(\cO_\cV(-\alpha_0-\lambda_a))$ are given by\label{it:piv-minus}
\[
\scalebox{0.73}{\hbox{$
\begin{array}{@{}c@{\ts}c@{\ts}c@{\ts}c@{\ts}c@{\ts}c@{\ts}c@{\ts}c@{\ts}c@{\ts}c@{\ts}c@{\ts}c@{}}
& 0 & 2 & 4 & \cdots & 6a-2 & 6a & 6a+2 & 6a+4 & \cdots & 6a+2r & \cdots  \\
\cH^2: &0 &H^2(\lambda_a^\sigma - 3a\alpha_0) & H^2(\lambda_a^\sigma - (3a-1)\alpha_0) & \cdots & H^2(\lambda_a^\sigma-2\alpha_0) & 0 \\
\cH^1: &0 &H^1(\lambda_a^\sigma - 3a\alpha_0) & H^1(\lambda_a^\sigma - (3a-1)\alpha_0) & \cdots & H^1(\lambda_a^\sigma-2\alpha_0) & 0 \\
\cH^0: &0 &&&&& 0 &H^0(\lambda_a^\sigma) & H^0(\lambda_a^\sigma+\alpha_0) & \cdots & H^0(\lambda_a^\sigma+r\alpha_0) & \cdots
\end{array}$}}
\]
where $\lambda_a^\sigma = -w_0\lambda_a = (2a,-a,-a)$.  In particular, $\cH^1(\pi_{\cV*}(\cO_\cV(-\alpha_0-\lambda_a))$ and $\cH^2(\pi_{\cV*}(\cO_\cV(-\alpha_0-\lambda_a)))$ have $0$-dimensional support.
\end{enumerate}
\end{lem}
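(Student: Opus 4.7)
The plan is to factor the computation through the projection $p \colon \cV \to G/B$, which is affine (being the structure map of a line bundle), and then to apply standard cohomology vanishing on $G/B$.

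Since $\cV$ is the total space of the line bundle associated to the $(B \times \Gm)$-module $\fu_{-\alpha_0}$ (of $B$-weight $-\alpha_0$ and $\Gm$-weight $-2$), affineness of $p$ yields $Rp_*\cO_\cV = p_*\cO_\cV = \bigoplus_{r \ge 0} \cO_{G/B}(r\alpha_0)\la -2r\ra$. The projection formula, applied to $\cO_\cV(\mu) = p^*\cO_{G/B}(\mu)$, then gives
\[
p_*\cO_\cV(\mu) \cong \bigoplus_{r \ge 0} \cO_{G/B}(\mu + r\alpha_0)\la -2r\ra.
\]
Because $\ol{C_{[2,1]}}$ is affine, a $(G \times \Gm)$-equivariant coherent sheaf on it is determined by its global sections, and the same holds for the cohomology sheaves of any bounded complex. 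Combining this with the factorization through $p$ gives
\[
\Gamma\bigl(\ol{C_{[2,1]}},\cH^i(\pi_{\cV*}\cO_\cV(\mu))\bigr) \cong \bigoplus_{r \ge 0} H^i(\mu + r\alpha_0)\la -2r\ra,
\]
which reduces the problem to a weight-by-weight analysis of cohomology on $G/B$.

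For part (1), each $\lambda_a + r\alpha_0 = (a+r,a,-2a-r)$ is dominant for $a, r \ge 0$, so Kempf vanishing forces $H^{>0} = 0$ and leaves the tabulated row of $H^0$'s, placed in grade $2r$ by the shift $\la -2r\ra$. For part (2), set $\mu_r := -\alpha_0 - \lambda_a + r\alpha_0 = (r-1-a,-a,2a+1-r)$ and split into four regions. \textit{(i)} If $r = 0$, then $\langle \mu_0,\alpha_1^\vee\rangle = -1$, so $\cO_{G/B}(\mu_0)$ restricts to $\cO_{\mathbb{P}^1}(-1)$ on every fiber of $G/B \to G/P_{\alpha_1}$, forcing $H^\bullet(\mu_0) = 0$. \textit{(ii)} If $r = 3a$, the same argument with $\alpha_2$ gives $H^\bullet(\mu_{3a}) = 0$. \textit{(iii)} If $r \ge 3a+1$, one checks $\mu_r = \lambda_a^\sigma + (r-3a-1)\alpha_0$ is dominant, and Kempf vanishing produces the $H^0$-entries of the table. \textit{(iv)} If $1 \le r \le 3a-1$, neither $\mu_r$ nor $-\mu_r - 2\rho$ is dominant, so $H^0(\mu_r) = 0$ directly and $H^3(\mu_r) = 0$ by Serre duality on $G/B$, leaving at most $H^1$ and $H^2$; each of these is finite-dimensional over $\bk$, so the direct sum over the finite range $1 \le r \le 3a-1$ is finite-dimensional, and the resulting graded pieces of $\cH^1$ and $\cH^2$ have $0$-dimensional support.

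The routine part is the reduction via $p$; the only delicate step is the ``wall vanishing'' in cases (i) and (ii), which is handled uniformly in any characteristic by the standard $\mathbb{P}^1$-fiber argument, rather than by appealing to a full characteristic-free Borel--Weil--Bott theorem.
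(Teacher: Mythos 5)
Your proof is correct and follows essentially the same route as the paper: both reduce to the decomposition $\bigoplus_{r\ge 0} H^i(\mu+r\alpha_0)\la -2r\ra$ (you via $p_*$ and the projection formula, the paper via $R\Ind_B^G$ of $\Sym(\fu_{-\alpha_0}^*)\otimes\bk_\mu$, which is the same computation) and then run the identical case analysis using Kempf vanishing, the vanishing of $R\Ind_B^G\bk_\mu$ when $\langle\mu,\alpha^\vee\rangle=-1$ (your $\mathbb{P}^1$-fiber argument is just the proof of the fact the paper cites from Jantzen), and Serre duality to kill $H^3$.
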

The notation $\lambda_a^\sigma$ above is related to the use of $({-})^\sigma$ in Section~\ref{sec:gradedlv} by the fact that $H^i(\lambda_a)^\sigma \cong H^i(\lambda_a^\sigma)$ (cf.~the proof of Proposition~\ref{prop:pcoh-twist}).

\begin{proof}
For any weight $\mu$, the $G$-module structure on global sections of $\pi_{\cV*}\cO_\cV(\mu)$ is given by 
\[
R\Gamma(\pi_{\cV*}\cO_{\cV}(\mu)) \cong R\Ind_B^G \left(\bigoplus_{n \ge 0} \Sym^n(\fu_{-\alpha_0}^*) \otimes \bk_\mu \right) \cong \bigoplus_{n \ge 0} R\Ind_B^G \bk_{\mu + n\alpha_0}.
\]
Since $\Gm$ acts on $\fu_{-\alpha_0}$ with weight $-2$, it acts on $\Sym^n(\fu_{-\alpha_0}^*)$ with weight $2n$.  In other words, the $2n$-th graded piece of $\cH^i(\pi_{\cV*}\cO_\cV(\mu))$ is $H^i(\mu+n\alpha_0)$.  Since $\dim G/B = 3$, we clearly have $H^i(\mu+n\alpha_0) = 0$ unless $0 \le i \le 3$.  

Suppose now that $\mu = \lambda_a$.  The weights $\lambda_a + n\alpha_0$ are dominant for all $n \ge 0$, so $\cH^i(\pi_{\cV*}\cO_\cV(\mu)) = 0$ for $i > 0$.

Next, let us take $\mu = -\alpha_0 - \lambda_a$.  Note that
\begin{multline*}
-\alpha_0 -\lambda_a + n\alpha_0 = (-a-1+n,-a,2a+1-n) \\
= (2a+n-3a-1,-a,-a-n+3a+1) = \lambda_a^\sigma + (n-3a-1)\alpha_0.
\end{multline*}
In other words, the $2n$-th graded piece of $\cH^i(\pi_{\cV*}\cO_\cV(-\alpha_0-\lambda_a))$ is $H^i(\lambda_a^\sigma + (n-3a-1)\alpha_0)$.  For $n \ge 0$, the weights $\lambda_a^\sigma + (n-3a-1)\alpha_0$ are never of the form $w_0\mu - 2\rho$ with $\mu \in \bX^+$, where $2\rho = (2,0,-2)$.  By Serre duality (see, for instance,~\cite[Eq.~II.4.2.(9)]{jantzen}, along with~\cite[Proposition~II.2.6]{jantzen}), we deduce that $H^3(\lambda_a^\sigma + (n-3a-1)\alpha_0) = 0$ for all $n$, and hence $\cH^3(\pi_{\cV*}\cO_\cV(\mu)) = 0$.

Next, if $n = 0$ or $n = 3a$, the weight $\lambda_a^\sigma + (n-3a-1)\alpha_0$ pairs with one of the simple coroots to give $-1$.  In these two cases, by~\cite[Proposition~II.5.4]{jantzen}, we have $R\Ind_B^G \bk_{\lambda_a^\sigma + (n-3a-1)\alpha_0} = 0$.  If $0 < n < 3a$, then $\lambda_a^\sigma + (n-3a-1)\alpha_0$ is not dominant, so $H^0(\lambda_a^\sigma + (n-3a-1)\alpha_0) = 0$.  Finally, if $n > 3a$, then $\lambda_a^\sigma + (n-3a-1)\alpha_0$ is dominant, so $H^i(\lambda_a^\sigma+(n-3a-1)\alpha_0) = 0$ for $i > 0$.

These calculations show that $\cH^1(\pi_{\cV*}(\cO_\cV(-\alpha_0-\lambda_a))$ and $\cH^2(\pi_{\cV*}(\cO_\cV(-\alpha_0-\lambda_a))$ are finite-dimensional $G$-modules, so as coherent sheaves, they have $0$-dimensional support.
\end{proof}

\begin{rmk}\label{rmk:piv-0}
When $\bk$ has characteristic $0$, we can refine Lemma~\ref{lem:piv-cohom}\eqref{it:piv-minus} somewhat.  By the Borel--Weil--Bott theorem, $H^i(\lambda_a^\sigma + (n-3a-1)\alpha_0)$ is nonzero for at most one $i$.  In fact, by examining the signs of the pairing of $\lambda_a^\sigma + (n-3a-1)\alpha_0$ with various coroots, one can see that $H^i(\lambda_a^\sigma + (n-3a-1)\alpha_0)$ is nonzero only for $i = 2$ if $0 < n < \frac{3a}{2}$, and only for $i = 1$ if $\frac{3a}{2} < n < 3a$.  Moreover, if $n = \frac{3a}{2}$, then $H^i(\lambda_a^\sigma + (n-3a-1)\alpha_0) = 0$ for all $i$.

We record this observations as follows.  If $\bk$ has characteristic $0$ and $a \ge 0$ is even, then $\pi_{\cV*}(\cO_\cV(-\alpha_0-\lambda_a)$ is given by
\[
\scalebox{0.73}{\hbox{$
\begin{array}{@{}c@{\ts}c@{\ts}c@{\ts}c@{\ts}c@{\ts}c@{\ts}c@{\ts}c@{\ts}c@{\ts}c@{\ts}c@{\ts}c@{}}
 & 2 & \cdots & 3a-2 & 3a & 3a+2 & \cdots & 6a-2 & 6a & 6a+2 & 6a+4 & \cdots  \\
\cH^2: &H^2(\lambda_a^\sigma - 3a\alpha_0) & \cdots & H^2(\lambda_a^\sigma - \frac{3a+4}{2}\alpha_0) &0 &  &  &  & 0 \\
\cH^1: & & &  & 0 & H^1(\lambda_a^\sigma - \frac{3a}{2}\alpha_0) & \cdots & H^1(\lambda_a^\sigma-2\alpha_0) & 0 \\
\cH^0: &&&& 0 &&&& 0 &H^0(\lambda_a^\sigma) & H^0(\lambda_a^\sigma+\alpha_0) & \cdots 
\end{array}$}}
\]
If $\bk$ has characteristic $0$ and $a \ge 1$ is odd, then $\pi_{\cV*}(\cO_\cV(-\alpha_0-\lambda_a))$ is given by
\[
\scalebox{0.73}{\hbox{$
\begin{array}{@{}c@{\ts}c@{\ts}c@{\ts}c@{\ts}c@{\ts}c@{\ts}c@{\ts}c@{\ts}c@{\ts}c@{\ts}c@{}}
& 2 & \cdots & 3a-1 & 3a+1 & \cdots & 6a-2 & 6a & 6a+2 & 6a+4 & \cdots  \\
\cH^2: &H^2(\lambda_a^\sigma - 3a\alpha_0) & \cdots & H^2(\lambda_a^\sigma - \frac{3a+3}{2}\alpha_0) &  &  &  & 0 \\
\cH^1:  & & &  & H^1(\lambda_a^\sigma - \frac{3a+1}{2}\alpha_0) & \cdots & H^1(\lambda_a^\sigma-2\alpha_0) & 0 \\
\cH^0: &&&&&&& 0 &H^0(\lambda_a^\sigma) & H^0(\lambda_a^\sigma+\alpha_0) & \cdots 
\end{array}$}}
\]
\end{rmk}

\begin{prop}\label{prop:ic-a-calc}
For any $a \ge 0$, there is an isomorphism
\[
\IC([2,1],\bk_{-a}) \cong \tau^{\le 2}(\pi_{\cV*}\cO_\cV(-\alpha_0-\lambda_a)[-1])\la 3a+2\ra,
\]
where $\tau^{\le 2}$ denotes truncation with respect to the standard $t$-structure.
\end{prop}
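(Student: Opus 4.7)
The plan is to verify the four defining properties of $\IC([2,1], \bk_{-a})$ for $\cG := \tau^{\le 2}(\pi_{\cV*}\cO_\cV(-\alpha_0-\lambda_a)[-1])\la 3a+2\ra$: that $\cG$ is a perverse-coherent sheaf supported on $\overline{C_{[2,1]}}$, with $\cG|_{C_{[2,1]}} \cong \bk_{-a}[-1]$, and having no nonzero sub- or quotient-objects in $\PCoh^\Gm(\cN)$ supported on $\{0\}$.

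First I would read off the cohomology sheaves of $\cG$ from Lemma~\ref{lem:piv-cohom}(2): $\pi_{\cV*}\cO_\cV(-\alpha_0-\lambda_a)$ has cohomology only in degrees $0, 1, 2$, with $\cH^1$ and $\cH^2$ supported at $\{0\}$. After the $[-1]$-shift, the would-be $\cH^3$ (coming from the original $\cH^2$) sits at $\{0\}$, and $\tau^{\le 2}$ discards it. Thus $\cG$ has $\cH^1$ supported on $\overline{C_{[2,1]}}$ (from the original $\cH^0$) and $\cH^2$ supported at $\{0\}$ (from the original $\cH^1$). Combining Lemma~\ref{lem:rational-resolution} (which identifies $\pi_\cV^{-1}(C_{[2,1]}) \cong C_{[2,1]}$) with Lemma~\ref{lem:wt-restrict} gives $\cH^1(\cG)|_{C_{[2,1]}} \cong \bk_{-a}\la -3a-2\ra\la 3a+2\ra = \bk_{-a}$, whence $\cG$ is supported on $\overline{C_{[2,1]}}$ with the desired restriction.

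Next I would verify $\cG \in \PCoh^\Gm(\cN)$. Since $\cG$ is not concentrated in a single cohomological degree---the $\cH^2$-piece at $\{0\}$ is non-trivial for $a \ge 1$, which is precisely the new phenomenon highlighted in the introduction---a naive stratum-by-stratum degree-bound check is insufficient, and I would instead argue via Serre--Grothendieck duality. By Lemma~\ref{lem:V-properties}, $\D_\cN$ carries the untruncated complex $\pi_{\cV*}\cO_\cV(-\alpha_0-\lambda_a)[-1]\la 3a+2\ra$ to $\pi_{\cV*}\cO_\cV(\lambda_a)[-1]\la 2-3a\ra$, which by Lemma~\ref{lem:piv-cohom}(1) is a single coherent sheaf placed in cohomological degree~$1$. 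Dualizing the truncation triangle for $\cG$ and comparing against the corresponding triangle on the original side yields a double-sided sandwich that pins $\cG$ down inside $\PCoh^\Gm(\cN)$.

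For the last property, by Remark~\ref{rmk:pcoh-0} any perverse-coherent object supported at $\{0\}$ is concentrated in cohomological degree $3$. The truncation $\tau^{\le 2}$ precisely kills this degree, so the triangle
\[
\cG \to \pi_{\cV*}\cO_\cV(-\alpha_0-\lambda_a)[-1]\la 3a+2\ra \to \cH^2(\pi_{\cV*}\cO_\cV(-\alpha_0-\lambda_a))\la 3a+2\ra[-3] \to
\]
isolates the ``excess'' at $\{0\}$ unambiguously; combined with the dual statement via Lemma~\ref{lem:ic-dual-calc} and~(\ref{eqn:ic-dual}), this rules out both sub- and quotient-objects supported at $\{0\}$. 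The main obstacle is the perverse-coherence verification in the middle step: one must carefully trace the interaction between the standard truncation $\tau^{\le 2}$, the Serre--Grothendieck duality $\D_\cN$, and the perverse-coherent t-structure, since the usual single-cohomological-degree concentration argument does not apply to $\cG$.
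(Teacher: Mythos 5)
Your proposal follows the same strategy as the paper: verify the defining properties of $\IC([2,1],\bk_{-a})$ using Lemma~\ref{lem:piv-cohom} for the cohomology sheaves, Lemma~\ref{lem:wt-restrict} for the restriction to the open orbit, Lemma~\ref{lem:V-properties} and Remark~\ref{rmk:pcoh-0} to control the origin, and Serre--Grothendieck duality for the rest. The outline is sound, and your ``double-sided sandwich'' can be made to work: the paper's version is to first check the two-sided dimension-support criterion of \cite[Theorem~4.6(2)]{achar} for the \emph{untruncated} complex (using that its dual $\pi_{\cV*}\cO_\cV(\lambda_a)\la 4\ra[-1]$ is a single sheaf in degree~$1$), observe that $\tau^{\ge 3}$ of it is perverse-coherent by Remark~\ref{rmk:pcoh-0}, and conclude from the truncation triangle that $\tau^{\le 2}$ of it is as well. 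Incidentally, your claim that ``a naive stratum-by-stratum degree-bound check is insufficient'' for objects spread over several cohomological degrees is misleading: that check, applied to the object \emph{and} to its dual, is precisely the criterion, and it handles such objects with no difficulty.

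The one genuine gap is the exclusion of subobjects supported at $C_{[1,1,1]}$. Ruling out \emph{quotients} is the easy degree argument ($\cG \in \Db\Cohmix(\cN)^{\le 2}$ while $\cF[-3] \in \Db\Cohmix(\cN)^{\ge 3}$), but the subobject case cannot be handled by citing Lemma~\ref{lem:ic-dual-calc} and~\eqref{eqn:ic-dual}: those describe duals of objects already known to be of the form $\IC(C,V)$, which is exactly what you are trying to establish for $\cG$, so the appeal is circular. The argument you need is: a subobject $\cF[-3] \subset \cG$ is also a subobject of the full pushforward $\pi_{\cV*}\cO_\cV(-\alpha_0-\lambda_a)[-1]$ in $\PCoh^\Gm(\cN)$; applying $\D_\cN$ and Lemma~\ref{lem:V-properties} yields a surjection $\pi_{\cV*}\cO_{\cV}(\lambda_a)\la 4\ra[-1] \twoheadrightarrow \cF'[-3]$ with $\cF'$ a sheaf supported at the origin, and since $\pi_{\cV*}\cO_{\cV}(\lambda_a)$ is a single coherent sheaf, such a map lives in $\Ext^{-2}_{\Cohmix(\cN)}(\pi_{\cV*}\cO_{\cV}(\lambda_a)\la 4\ra,\cF')=0$, a contradiction. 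Without this step (or an equivalent one) the proof is incomplete.
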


\begin{proof}
We begin by showing that $\pi_{\cV*}\cO_\cV(\lambda_a)[-1]$ and $\D \pi_{\cV*}(\cO_{\cV}(\lambda_a)[-1])$ are per\-verse-coherent sheaves.  To do this, we must check the dimension-support conditions given in 
\cite[Theorem~4.6(2)]{achar}.  
By Lemma~\ref{lem:piv-cohom}, the cohomology of $\pi_{\cV*}\cO_{\cV}(\lambda_a)[-1]$ is concentrated in degree 1, and is supported on $\ol{C_{[2,1]}}$, which has dimension~$4$. By Lemmas~\ref{lem:V-properties} and~\ref{lem:piv-cohom}, the cohomology of
\[
\D(\pi_{\cV*}\cO_{\cV}(\lambda_a)[-1]) \cong \pi_{\cV*}\cO_\cV(-\alpha_0-\lambda_a)\la 4\ra[-1]
\]
satisfies $\cH^i(\D(\pi_{\cV*}\cO_{\cV}(\lambda_a)[-1])) =0$ unless $1 \leq i \leq 3$.  For $i = 1$, the support of this sheaf again has dimension $4$.  For $i = 2,3$, Lemma~\ref{lem:piv-cohom} says that these sheaves have $0$-dimensional support.
Therefore,  $\pi_{\cV*}\cO_{\cV}(\lambda_a)[-1]$ and $\D(\pi_{\cV*}\cO_{\cV}(\lambda_a)[-1])$ satisfy the dimension-support conditions, and are thus perverse coherent.

Next, consider the following truncation distinguished triangle (taken with respect to the standard $t$-structure):
\begin{multline}\label{eq:ses-1}
\tau^{\le 2}(\pi_{\cV*}\cO_\cV(-\alpha_0-\lambda_a)[-1]) \to
\pi_{\cV*}\cO_\cV(-\alpha_0-\lambda_a)[-1] \to \\
\tau^{\ge 3}(\pi_{\cV*}\cO_\cV(-\alpha_0-\lambda_a)[-1]) \to.
\end{multline}
We have just seen that the middle term is perverse-coherent.  The cohomology sheaves of the first and third terms still obey a one-sided version of the conditions in~\cite[Theorem~4.6(2)]{achar}, so they at least lie in ${}^p\Db\Cohmix(\cN)^{\le 0}$.

On the other hand, the third term of~\eqref{eq:ses-1} is concentrated in cohomological degree $3$ and has $0$-dimensional support, so by Remark~\ref{rmk:pcoh-0}, it is perverse-coherent.  It follows that the first term at least belongs to ${}^p\Db\Cohmix(\cN)^{\ge 0}$.  Combining this with the previous paragraph, we see that all three terms in~\eqref{eq:ses-1} are perverse-coherent.  In other words,~\eqref{eq:ses-1} is actually a short exact sequence in $\PCoh^\Gm(\cN)$.

Let $\cG_a = \tau^{\le 2}(\pi_{\cV*}\cO_\cV(-\alpha_0-\lambda_a)[-1])$.  We wish to prove that
\[
\cG_a \cong \IC([2,1],\bk_{-a})\la -3a-2\ra.
\]
To prove this, we must check the following conditions: 
\begin{enumerate}
\item $\cG_a$ is supported on $\ol{C_{[2,1]}}$, and $\cG_a|_{C_{[2,1]}}[1]$ is the line bundle $\bk_{-a}\la -3a-2\ra$.
\item $\cG_a $ has no simple subobject or quotient supported on $C_{[1,1,1]}$.
\end{enumerate}
That $\cG_a$ is supported on $\ol{C_{[2,1]}}$ is clear.  Since the third term of~\eqref{eq:ses-1} is supported (at least set-theoretically) on $C_{[1,1,1]}$, that distinguished triangle shows us that
\[
\cG_a|_{C_{[2,1]}}[1] \cong (\pi_{\cV*}\cO_{\cV}(-\alpha_0-\lambda_a))|_{C_{[2,1]}}.
\]
Since $-\alpha_0-\lambda_a = (-a-1,-a,2a+1)$, Lemma~\ref{lem:wt-restrict} tells us that this line bundle is indeed $\bk_{-a}\la-3a-2\ra$.  Thus, condition~(1) holds.

Recall from Remark~\ref{rmk:pcoh-0} that a perverse-coherent sheaf supported (set-theo\-ret\-ic\-ally) on $C_{[1,1,1]}$ is of the form $\cF[-3]$, where $\cF \in \Cohmix(\cN)$ has finite-dimensional global sections.  If $\cG_a$ had such an object as a quotient, we would have
\begin{equation*}
\Hom_{\Db\Cohmix(\cN)}(\tau^{\le 2} (\pi_{\cV*}\cO_{\cV}(-\alpha_0-\lambda_a)[-1]), \cF[-3]) \ne 0,
\end{equation*}
but this is impossible since $\cF[-3] \in \Db\Cohmix(\cN)^{\ge 3}$.

Suppose now that $\cG_a$ had a subobject $\cF[-3]$ supported on $C_{[1,1,1]}$.  Then this would also be a subobject of $\pi_{\cV*}\cO_{\cV}(-\alpha_0-\lambda_a)[-1]$.  Apply $\D$ to get a surjective map
\[
\D(\pi_{\cV*}\cO_{\cV}(-\alpha_0-\lambda_a)[-1]) \twoheadrightarrow \D(\cF[-3]).
\]
Now, $\D(\cF[-3])$ can be written in the form $\cF'[-3]$ for some $\cF' \in \Cohmix(\cN)$ that is again supported on $C_{[1,1,1]}$.  By Lemma~\ref{lem:V-properties}, the map above can be rewritten as a nonzero map $\pi_{\cV*}\cO_{\cV}(\lambda_a)\la 4\ra[-1] \twoheadrightarrow \cF'[-3]$. This map is a nonzero element of 
\begin{multline*}
\Hom_{\Db\Cohmix(\cN)}(\pi_{\cV*}\cO_{\cV}(\lambda_a)\la 4\ra[-1], \cF'[-3]) \\
\cong
\Ext^{-2}_{\Cohmix(\cN)}(\pi_{\cV*}\cO_{\cV}(\lambda_a)\la 4\ra, \cF'),
\end{multline*}
but this is nonsensical. (Here, we are using the fact from Lemma~\ref{lem:piv-cohom} that $\pi_{\cV*}\cO_{\cV}(\lambda_a)$ is a coherent sheaf.)
\end{proof}

The main result of this paper is the following.  In this statement, we normalize the grading shifts in accordance with Theorem~\ref{thm:graded-lv}.  Via the Lusztig--Vogan bijection (Table~\ref{tab:lv}), one can read off $\fC_\lambda$ for any $\lambda \in \bX^+$ from this theorem.

\begin{thm}
\phantomsection
\label{thm:main}
\begin{enumerate}
\item Let $\lambda \in \bX^+$, and let $L(\lambda)$ be the corresponding irreducible representation of $G$.  Let $i: C_{[1,1,1]} \hookrightarrow \cN$ be the inclusion map.  Then
\[
\IC([1,1,1], L(\lambda))\la 3\ra \cong i_*L(\lambda)[-3]\la 3\ra.
\]  
\item Let $a \ge 0$.  As graded $G$-modules, the cohomology sheaves of $\IC([2,1], \bk_a)\la 1\ra$ are given by
\[
\scalebox{0.70}{\hbox{$
\begin{array}{@{}c@{\ts}c@{\ts}c@{\ts}c@{\ts}c@{\ts}c@{\ts}c@{\ts}c@{\ts}c@{\ts}c@{\ts}c@{\ts}c@{}}
& -3a-1 & -3a+1 & \cdots & 3a-5 & 3a-3 & 3a-1 & 3a+1 & \cdots & 3a-3+2r & \cdots  \\
\cH^2: &H^1(\lambda_a - 3a\alpha_0) & H^1(\lambda_a - (3a-1)\alpha_0) & \cdots & H^1(\lambda_a-2\alpha_0) & 0 \\
\cH^1: &&&&& 0 &H^0(\lambda_a) & H^0(\lambda_a+\alpha_0) & \cdots & H^0(\lambda_a+r\alpha_0) & \cdots
\end{array}$}}
\]
The cohomology sheaves of $\IC([2,1],\bk_{-a})\la1\ra$ are given by
\[
\scalebox{0.70}{\hbox{$
\begin{array}{@{}c@{\ts}c@{\ts}c@{\ts}c@{\ts}c@{\ts}c@{\ts}c@{\ts}c@{\ts}c@{\ts}c@{\ts}c@{\ts}c@{}}
& -3a-1 & -3a+1 & \cdots & 3a-5 & 3a-3 & 3a-1 & 3a+1 & \cdots & 3a-3+2r & \cdots  \\
\cH^2: &H^1(\lambda_a^\sigma - 3a\alpha_0) & H^1(\lambda_a^\sigma - (3a-1)\alpha_0) & \cdots & H^1(\lambda_a^\sigma-2\alpha_0) & 0 \\
\cH^1: &&&&& 0 &H^0(\lambda_a^\sigma) & H^0(\lambda_a^\sigma+\alpha_0) & \cdots & H^0(\lambda_a^\sigma+r\alpha_0) & \cdots
\end{array}$}}
\]
\item We have $\IC([3], \bk) \cong \cO_\cN$.
\end{enumerate}
\end{thm}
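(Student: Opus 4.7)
My plan is to treat the three parts separately. Parts~(1) and~(3) are immediate consequences of material already in place: Remark~\ref{rmk:pcoh-0} directly yields $\IC([1,1,1], L(\lambda)) \cong i_*L(\lambda)[-\tfrac{1}{2}\dim\cN] = i_*L(\lambda)[-3]$, whence Part~(1) after shifting by $\la 3\ra$. For Part~(3), the Andersen--Jantzen sheaf $A_0 = R\Ind_B^G\Sym(\fu^*)$ equals $\cO_\cN$ by~\eqref{eqn:kn-ind} and the rationality of the Springer resolution~\cite{bk}. Since $\delta_0 = 0$, we have $\Delta_0 = \nabla_0 = \cO_\cN$, so the canonical map between them is the identity and $\fC_0 = \cO_\cN$. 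According to Table~\ref{tab:lv}, the weight $(0,0,0)$ corresponds to $([3], \bk)$, and since $\codim C_{[3]} = 0$, Theorem~\ref{thm:graded-lv} gives $\IC([3], \bk) = \fC_0 = \cO_\cN$.

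For Part~(2) with $V = \bk_{-a}$ the computation is a direct reading-off from Proposition~\ref{prop:ic-a-calc} together with Lemma~\ref{lem:piv-cohom}(2). Proposition~\ref{prop:ic-a-calc} identifies $\IC([2,1], \bk_{-a})\la 1\ra$ with $\tau^{\le 2}(\pi_{\cV*}\cO_\cV(-\alpha_0 - \lambda_a)[-1])\la 3a+3\ra$; the shift by $[-1]$ moves the cohomology of $\pi_{\cV*}\cO_\cV(-\alpha_0 - \lambda_a)$ into degrees $1, 2, 3$, and $\tau^{\le 2}$ deletes the degree-$3$ piece (which is $0$-dimensionally supported, by Lemma~\ref{lem:piv-cohom}(2)). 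The surviving $\cH^1$ and $\cH^2$ are the $\cH^0$ and $\cH^1$ from that lemma, and subtracting $3a+3$ from each grading-degree label to account for $\la 3a+3\ra$ reproduces exactly the $\bk_{-a}$ table in the statement.

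For Part~(2) with $V = \bk_a$ my plan is to deduce the result from the $\bk_{-a}$ case by applying the opposition twist $({-})^\sigma$. The automorphism $\sigma_C$ of $G^{[2,1]}_\red \cong \Gm$ produced by Lemma~\ref{lem:sigmac} is either trivial or inversion; it cannot be trivial, because via Proposition~\ref{prop:pcoh-twist} and Lemma~\ref{lem:ic-sigmac} the induced bijection $\bk_b \leftrightarrow \bk_{b^{\sigma_C}}$ intertwines with the involution $\lambda \mapsto -w_0\lambda$ on the dominant weights corresponding to $([2,1], *)$, and the latter is visibly nontrivial (consult Table~\ref{tab:lv}). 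Hence $\sigma_C$ is inversion, so $\bk_{-a}^{\id \times \sigma_C} = \bk_a$ and Lemma~\ref{lem:ic-sigmac} gives $\IC([2,1], \bk_{-a})^\sigma \cong \IC([2,1], \bk_a)$. Since $({-})^\sigma$ is exact for the standard t-structure (it is a pullback along a ring automorphism of $\bk[\cN]$) and commutes with grading shifts, it transports the cohomology sheaves computed in the previous paragraph to those of $\IC([2,1], \bk_a)\la 1\ra$, twisting each module $H^j(\mu)$ there into $H^j(-w_0\mu)$ (as in the proof of Proposition~\ref{prop:pcoh-twist}). Using $-w_0\lambda_a^\sigma = \lambda_a$ and $-w_0\alpha_0 = \alpha_0$, every $\lambda_a^\sigma$ is uniformly replaced by $\lambda_a$, producing the $\bk_a$ table.

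The bulk of the analytical work has been absorbed into Proposition~\ref{prop:ic-a-calc} and Lemma~\ref{lem:piv-cohom}, so the main remaining obstacle is essentially bookkeeping: tracking the various grading shifts and sign conventions, and correctly pinning down the action of $\sigma_C$ on $G^{[2,1]}_\red$. Neither task is deep, but both demand care; in particular, the $\sigma_C$ identification is a minor point on which the entire $\bk_a$ computation hinges.
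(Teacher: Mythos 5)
Your proposal is correct and follows essentially the same route as the paper: parts (1) and (3) exactly as you describe, and part (2) by reading off Proposition~\ref{prop:ic-a-calc} together with Lemma~\ref{lem:piv-cohom} and then transporting the $\bk_{-a}$ answer to $\bk_a$ via the opposition twist, using $H^j(\mu)^\sigma \cong H^j(-w_0\mu)$. The only (harmless) deviation is your identification of $\sigma_C$ as inversion on $G^{[2,1]}_\red$ via Table~\ref{tab:lv}: the paper shortcuts this by invoking Remark~\ref{rmk:ic-sigma}, since the proof of Theorem~\ref{thm:graded-lv} already showed $V^{\id\times\sigma_C} \cong V^*$ for irreducible $V$, so your Table-based argument, while correct, is not needed.
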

\begin{proof}
The description of $\IC([1,1,1], L(\lambda))\la 3\ra$ is obvious (cf.~Remark~\ref{rmk:pcoh-0}). Next, the simple object $\IC([3],\bk) \cong \fC_{(0,0,0)}$ is isomorphic to $\Delta_{(0,0,0)} \cong \nabla_{(0,0,0)} \cong \pi_*\cO_\tcN$.  The latter is isomorphic to $\cO_\cN$ by, say,~\cite[Theorem~5.3.2]{bk}.

Next, according to Proposition~\ref{prop:ic-a-calc}, if $a \ge 0$, then $\IC([2,1],\bk_{-a})\la 1\ra$ is given by applying the shift $[-1]\la 3a+3\ra$ to the table from Lemma~\ref{lem:piv-cohom}, and then truncating.  The result is recorded above.

Finally, by Remark~\ref{rmk:ic-sigma}, we have $\IC([2,1],\bk_a)\la 1\ra \cong \IC([2,1], \bk_{-a})\la 1\ra^\sigma$.  The reasoning from the proof of Proposition~\ref{prop:pcoh-twist} shows that $(R\Ind_B^G \bk_\mu)^\sigma \cong R\Ind_B^G \bk_{-w_0\mu}$.  In particular,
\[
H^i(\lambda_a^\sigma + r\alpha_0)^\sigma \cong H^i(-w_0(\lambda_a^\sigma + r\alpha_0)) \cong H^i(\lambda_a + r\alpha_0).
\]
Thus, the description of $\IC([2,1],\bk_a)$ is obtained from that of $\IC([2,1], \bk_{-a})$ simply by replacing $\lambda_a^\sigma$ by $\lambda_a$ throughout.
\end{proof}

\begin{cor}\label{cor:main-0}
Suppose that $\bk$ has characteristic $0$.  If $a \ge 0$ is even, then $\IC([2,1],\bk_a)\la 1\ra$ is given by
\[
\scalebox{0.85}{\hbox{$
\begin{array}{@{}c@{\ts}c@{\ts}c@{\ts}c@{\ts}c@{\ts}c@{\ts}c@{\ts}c@{\ts}c@{\ts}c@{\ts}c@{\ts}c@{}}
& -3 & -1 & \cdots & 3a-5 & 3a-3 & 3a-1 & 3a+1 & \cdots & 3a -3 + 2r & \cdots \\
\cH^2: & 0 & H^1(\lambda_a - \frac{3a}{2}\alpha_0) & \cdots & H^1(\lambda_a-2\alpha_0) & 0 \\
\cH^1: & 0 &&&& 0 &H^0(\lambda_a) & H^0(\lambda_a+\alpha_0) & \cdots & H^0(\lambda_a + r\alpha_0) & \cdots
\end{array}$}}
\]
If $a \ge 1$ is odd, then $\IC([2,1],\bk_a)\la 1\ra$ is given by
\[
\scalebox{0.85}{\hbox{$
\begin{array}{@{}c@{\ts}c@{\ts}c@{\ts}c@{\ts}c@{\ts}c@{\ts}c@{\ts}c@{\ts}c@{\ts}c@{\ts}c@{}}
& -2 & \cdots & 3a-5 & 3a-3 & 3a-1 & 3a+3 & \cdots & 3a-3+2r & \cdots  \\
\cH^2:  & H^1(\lambda_a - \frac{3a+1}{2}\alpha_0) & \cdots & H^1(\lambda_a-2\alpha_0) & 0 \\
\cH^1: &&&& 0 &H^0(\lambda_a) & H^0(\lambda_a+\alpha_0) & \cdots & H^0(\lambda_a + r\alpha_0) & \cdots
\end{array}$}}
\]
\end{cor}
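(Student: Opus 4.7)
The plan is to derive Corollary~\ref{cor:main-0} from Theorem~\ref{thm:main}(2) by invoking Borel--Weil--Bott (valid in characteristic $0$) to pin down exactly which of the cohomology groups $H^1(\lambda_a - k\alpha_0)$ actually vanish. The $\cH^1$ row in Theorem~\ref{thm:main}(2), consisting of the $H^0(\lambda_a + r\alpha_0)$ for $r \ge 0$, requires no refinement: each weight $\lambda_a + r\alpha_0 = (a+r, a, -2a-r)$ is dominant, so $H^0(\lambda_a + r\alpha_0) = L(\lambda_a + r\alpha_0)$ is a nonzero irreducible $G$-module regardless of the characteristic.

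For the $\cH^2$ row, I would analyze the weights $\lambda_a - k\alpha_0 = (a-k, a, -2a+k)$ for $2 \le k \le 3a$ via Borel--Weil--Bott. Computing $(\lambda_a - k\alpha_0) + \rho = (a-k+1, a, -2a+k-1)$ and comparing coordinates, a short case analysis identifies three regimes: the weight lies on a wall (so all $H^i$ vanish) precisely when $2k = 3a+2$, which requires $a$ even; it lies in the chamber reached by $s_1$ when $2 \le k < (3a+2)/2$, giving $H^1 \neq 0$; and it lies in the chamber reached by $s_2 s_1$ when $(3a+2)/2 < k \le 3a$, giving $H^2 \neq 0$ and hence $H^1 = 0$.

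Substituting these vanishings into the $\cH^2$ row of Theorem~\ref{thm:main}(2), and translating from $k$ to grading degree via $d = 3a - 1 - 2k$ (which one reads off from the theorem's table, where $H^1(\lambda_a - 3a\alpha_0)$ sits at degree $-3a - 1$ and $H^1(\lambda_a - 2\alpha_0)$ at degree $3a - 5$), the surviving entries in the $\cH^2$ row are $H^1(\lambda_a - k\alpha_0)$ for $k \in \{2,\ldots,\lfloor (3a+1)/2 \rfloor\}$. For $a$ even, $k$ ranges from $2$ (at degree $3a-5$) up to $3a/2$ (at degree $-1$), with the entry at degree $-3$ (corresponding to the wall $k = 3a/2 + 1$) vanishing; for $a$ odd, $k$ runs up to $(3a+1)/2$ (at degree $-2$). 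Reading this off produces the two tables displayed in the corollary.

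No serious obstacle arises: the substantive content is entirely contained in Theorem~\ref{thm:main}(2) and Borel--Weil--Bott, and the work is largely careful bookkeeping in translating between indices $k$, degrees $d$, and the wall condition. Alternatively, one could obtain the $\bk_{-a}$ case by substituting Remark~\ref{rmk:piv-0} directly into Proposition~\ref{prop:ic-a-calc}, and then recover the $\bk_a$ case via the $\sigma$-symmetry $\IC([2,1],\bk_a) \cong \IC([2,1],\bk_{-a})^\sigma$ used at the end of the proof of Theorem~\ref{thm:main}; this yields the same table.
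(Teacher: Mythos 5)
Your proposal is correct and is essentially the paper's own argument: the paper proves the corollary by substituting the characteristic-$0$ refinement of Lemma~\ref{lem:piv-cohom}(2) recorded in Remark~\ref{rmk:piv-0} (which is exactly the Borel--Weil--Bott case analysis you carry out, including the wall at $2k=3a+2$) into the proof of Theorem~\ref{thm:main}. Whether one applies Borel--Weil--Bott upstream to the table for $\pi_{\cV*}\cO_\cV(-\alpha_0-\lambda_a)$ as in Remark~\ref{rmk:piv-0}, or directly to the weights $\lambda_a-k\alpha_0$ appearing in Theorem~\ref{thm:main}(2) as in your primary route, is only a reindexing of the same computation, and your alternative route at the end is literally the paper's proof.
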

Of course, there are similar formulas for $\IC([2,1],\bk_{-a})\la 1\ra$ as well.
\begin{proof}
In the proof of Theorem~\ref{thm:main}, use the tables from Remark~\ref{rmk:piv-0} instead of those from Lemma~\ref{lem:piv-cohom}.
\end{proof}

\begin{rmk}
When $p > 0$, the cohomology modules $H^1(\lambda_a - r\alpha_0)$ are quite complicated, and their structure is still not completely understood. 
Their vanishing behavior differs from the $p=0$ case, and is instead determined by
 \cite[Theorem 4.5(i)]{and1979}. The characters of these objects are also complicated, but can actually be recursively computed by the formulas 
 appearing in \cite{hardesty}.\footnote{The recursive formulas originally appeared in \cite{donkin}, but were later found to contain mistakes by the second author of this paper.}
Furthermore, these recursive character formulas can be applied to obtain the ``characters'' (cf.~\cite[\S 4.2]{achar})  of the cohomology sheaves. 
\end{rmk}

\section{Applications and consequences}
\label{sec:appl}

\subsection{Cohomology of tilting modules for quantum groups}

Let $\zeta \in \C$ be a primitive $\ell$-th root of unity with $\ell > 3$ and odd. Also, let 
$\uenv_{\zeta} = \uenv_{\zeta}(\mathfrak{sl}_3(\C))$ be the Lusztig quantum group of ``adjoint type'' (see, for instance,~\cite[Remark~2.6.3]{abg}), and let $\su_{\zeta} = \su_{\zeta}(\mathfrak{sl}_{3}(\C))$ be the corresponding small quantum group.  Let $\Rep(\uenv_\zeta)$ be the category of finite-dimensional $\uenv_\zeta$-modules of type $1$.  For each $\mu \in \bX^+$, let $T_\zeta(\mu) \in \Rep(\uenv_\zeta)$ be the tilting module for $\uenv_\zeta$ of highest weight $\mu$.

Let $W_\aff = W \ltimes \bX$ be the affine Weyl group.\footnote{Since we are working in an adjoint group, $\bX$ is the root lattice, and $W \ltimes \bX$ is indeed a Coxeter group.} For $w = v \ltimes \lambda \in W_\aff$ and $\mu \in \bX$, we consider the ``dot action'' $w \bullet \mu = v(\mu + \ell\lambda + \rho) - \rho$, where, as usual, $\rho$ is one-half the sum of the positive roots.  (In our case, $\rho = \alpha_0 = (1,0,-1)$.)  For $\lambda \in \bX$, let $w_\lambda$ be the unique element of minimal length in the coset $W\lambda \subset W_\aff$.  By the linkage principle, a tilting module $T_\zeta(\mu)$ belongs to the block containing the trivial representation if and only if $\mu$ is of the form $w_\lambda \bullet 0$ for some $\lambda \in \bX$.

The main result of~\cite{bezru} computes the cohomology of tilting modules $T_\zeta(w_\lambda \bullet 0)$ in the principal block.  Specifically, according to~\cite[Theorem~1 and Corollary~2]{bezru}, there is an isomorphism of $G$-modules
\[
\Ext^\bullet_{\su_\zeta}(\C,T_\zeta(w_\lambda \bullet 0)) \cong 
\begin{cases}
R\Gamma(\fC_{w_0\lambda}) & \text{if $\lambda \in -\bX^+$,} \\
0 & \text{otherwise.}
\end{cases}
\]
(Note that in~\cite{bezru}, the simple perverse-coherent sheaves are labelled by \emph{antidominant} weights. The right-hand side above involves $w_0\lambda$ in order to match the conventions of the present paper.) The proof of~\cite[Proposition~9]{bezru} makes explicit the interaction of the gradings on either side: for $\lambda \in -\bX^+$, we have
\begin{equation}\label{eqn:t-cohom}
\Ext^k_{\su_\zeta}(\C, T_\zeta(w_\lambda \bullet 0)) = \bigoplus_{i \in \Z} \cH^i(\fC_{w_0\lambda})_{k-i}.
\end{equation}
Corollary~\ref{cor:main-0} lets us write down the right-hand side of this equation explicitly, and leads to the following observation.

\begin{prop}
Let $T \in \Rep(\uenv_\zeta)$ be a nontrivial indecomposable tilting module.  For any $k \ge 0$, the space $\Ext^k_{\su_\zeta}(\C, T)$ is either $0$ or an irreducible $PGL_3$-representation.
\end{prop}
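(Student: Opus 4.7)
The plan is to combine formula~\eqref{eqn:t-cohom} with the explicit descriptions in Theorem~\ref{thm:main} and Corollary~\ref{cor:main-0}. First, by the linkage principle, I would assume $T = T_\zeta(w_\lambda \bullet 0)$ for some $\lambda \in \bX$; by~\eqref{eqn:t-cohom} the $\Ext$-groups vanish unless $\lambda \in -\bX^+$, and since $T$ is nontrivial we have $\lambda \ne 0$. Thus $w_0\lambda \in \bX^+ \setminus \{0\}$, and Table~\ref{tab:lv} shows that the nilpotent orbit corresponding to $w_0\lambda$ under the Lusztig--Vogan bijection is either $C_{[1,1,1]}$ or $C_{[2,1]}$; I would handle these two cases separately.

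If the orbit is $C_{[1,1,1]}$, then Theorem~\ref{thm:main}(1) (combined with the normalization of Theorem~\ref{thm:graded-lv}) identifies $\fC_{w_0\lambda}$ with $i_*L(\mu)[-3]\la N\ra$ for some irreducible $G$-module $L(\mu)$ and some integer $N$. Its only nonzero cohomology sheaf is supported at the origin and concentrated in a single grading degree, carrying $L(\mu)$. By~\eqref{eqn:t-cohom}, exactly one $\Ext^k$ equals $L(\mu)$ and all others vanish.

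If the orbit is $C_{[2,1]}$, then $\fC_{w_0\lambda} \cong \IC(C_{[2,1]}, \bk_{\pm a})\la 1\ra$ for some $a \ge 0$, and only $\cH^1$ and $\cH^2$ are nonzero. By the Borel--Weil--Bott theorem in characteristic~$0$, each entry $H^0(\nu)$ or $H^1(\nu')$ appearing in the tables of Corollary~\ref{cor:main-0} is either $0$ or irreducible, so every graded piece $\cH^i(\fC_{w_0\lambda})_{k-i}$ is either $0$ or irreducible. It then remains to check that the two-term decomposition
\[
\Ext^k_{\su_\zeta}(\C, T) = \cH^1(\fC_{w_0\lambda})_{k-1} \oplus \cH^2(\fC_{w_0\lambda})_{k-2}
\]
coming from~\eqref{eqn:t-cohom} has at most one nonzero summand for each $k$.

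The main obstacle is this last disjointness check, but it is resolved by a parity observation: direct inspection of the tables of Corollary~\ref{cor:main-0} shows that, in both parities of $a$, the nonzero graded pieces of $\cH^1$ and of $\cH^2$ all sit in grading degrees of parity opposite to that of $a$. Hence $\cH^1(\fC_{w_0\lambda})_{k-1}$ can be nonzero only when $k \equiv a \pmod 2$, while $\cH^2(\fC_{w_0\lambda})_{k-2}$ can be nonzero only when $k \equiv a+1 \pmod 2$, and these conditions are mutually exclusive. This forces every $\Ext^k_{\su_\zeta}(\C, T)$ to be $0$ or a single irreducible summand.
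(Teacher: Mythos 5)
Your proposal is correct and follows essentially the same route as the paper: reduce via the linkage principle and~\eqref{eqn:t-cohom} to the explicit tables, treat the $C_{[1,1,1]}$ and $C_{[2,1]}$ cases separately, and invoke Borel--Weil--Bott for irreducibility. The paper simply asserts that the tables of Corollary~\ref{cor:main-0} show at most one term of~\eqref{eqn:t-cohom} is nonzero; your parity argument is a valid way to make that step explicit (one can also observe directly that the degree ranges occupied by $\cH^1$ and $\cH^2$, after the shifts by $1$ and $2$, are disjoint).
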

\begin{proof}
If $T$ is a nontrivial tilting module with nonzero cohomology, then it is of the form $T(w_\lambda \bullet 0)$ with $\lambda \in -\bX^+$ and $\lambda \ne 0$.  Thus, $\fC_{w_0\lambda}$ is a simple perverse-coherent sheaf whose support is either $C_{[1,1,1]}$ or $\ol{C_{[2,1]}}$.  In the former case, the right-hand side of~\eqref{eqn:t-cohom} consists of a single term (for $i = 3$), and that term is irreducible.  In the latter case, the formulas in Corollary~\ref{cor:main-0} show us again that the right-hand side of~\eqref{eqn:t-cohom} has at most one nonzero term.  By the Borel--Weil--Bott theorem, that term is irreducible.
\end{proof}

\subsection{Failure of positivity}

When $\bk$ has characteristic $0$, it can be shown for any $G$ that perverse-coherent sheaves obey a strong $\Ext$-vanishing property: for all $\lambda,\mu \in \bX^+$, we have
\begin{equation}\label{eqn:mixed}
\Ext^1(\fC_\lambda, \fC_\mu\la n\ra) = 0 \qquad \text{if $n \ge 0$.}
\end{equation}
(This can be proved by converting the problem to one about mixed $\ell$-adic constructible sheaves on the dual affine flag variety, using the main result of~\cite{bezru-psaf}.  For a similar argument, see the proof of~\cite[Lemma~9]{bezru}.)  This condition means that $\PCoh^\Gm(\cN)$ is a ``mixed category'' in the sense of~\cite[Definition~4.1.1]{bgs}.

In positive characteristic, condition~\eqref{eqn:mixed} is always false: because the representation theory of $G$ is not semisimple, one can always find counterexamples to~\eqref{eqn:mixed} with $n = 0$, and where $\fC_\lambda$ and $\fC_\mu$ are both supported on the zero nilpotent orbit.

However, the calculations in~\cite{achar} show that for $G = SL_2$, this is the extent of the failure: one still has
\begin{equation}\label{eqn:positive}
\Ext^1(\fC_\lambda,\fC_\mu\la n\ra) = 0 \qquad \text{if $n > 0$.}
\end{equation}
If this condition holds, we say that $\PCoh^\Gm(\cN)$ is \emph{positively graded}, because it is analogous to the category of graded modules over a positively graded ring.  In particular,~\cite[Lemma~4.1.2]{bgs} still holds: every object is equipped with a canonical filtration, and every morphism is strictly compatible with these filtrations.  For a discussion of positively graded categories in the context of parity sheaves, see~\cite{ar:mpsfv3}.

\begin{prop}
For $G = PGL_3$, the category $\PCoh^\Gm(\cN)$ is positively graded if and only if $\bk$ has characteristic $0$.
\end{prop}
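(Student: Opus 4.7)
In characteristic zero, $\PCoh^\Gm(\cN)$ is already known to admit a positive grading (as noted in the introduction, following from~\cite{bezru-psaf}), so I concentrate on proving that positivity fails when $\operatorname{char}\bk > 3$. The plan is to extract a nonzero element of $\Ext^1(\fC_{\mu'}, \fC_\lambda\la n\ra)$ with $n > 0$ from the short exact sequence of perverse-coherent sheaves built in the proof of Proposition~\ref{prop:ic-a-calc}:
\[
0 \to \IC([2,1], \bk_{-a})\la -3a-2\ra \to \pi_{\cV*}\cO_\cV(-\alpha_0-\lambda_a)[-1] \to \mathcal{E}_a \to 0,
\]
where $\mathcal{E}_a = \cH^2(\pi_{\cV*}\cO_\cV(-\alpha_0-\lambda_a))[-3]$ is supported at the origin. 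The degree-$2n$ graded piece of $\mathcal{E}_a$ is controlled by $H^2(\lambda_a^\sigma + (n-3a-1)\alpha_0)$; by Remark~\ref{rmk:piv-0}, this vanishes in characteristic zero whenever $n > 3a/2$, but in positive characteristic the failure of the Borel--Weil--Bott pattern on non-dominant weights can produce new contributions in that range.

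Using Theorem~\ref{thm:graded-lv}, the canonical normalizations give $\fC_\lambda = \IC([2,1], \bk_{-a})\la 1\ra$ and $\fC_{\mu'} = \IC([1,1,1], L(\mu))\la 3\ra$ for the relevant pair $(\lambda, \mu')$. A nonzero element of
\[
\Ext^1(\IC([1,1,1], L(\mu))\la -2n\ra,\, \IC([2,1], \bk_{-a})\la -3a-2\ra)
\]
then rewrites as a nonzero element of $\Ext^1(\fC_{\mu'}, \fC_\lambda\la 2n-3a\ra)$. Whenever $n > 3a/2$, the shift $2n-3a$ is strictly positive, and positivity is violated.

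The plan therefore reduces to two subclaims. The first is that for each prime $p > 3$ there exist $a \ge 1$ and $n$ with $3a/2 < n < 3a$ such that $H^2(\lambda_a^\sigma + (n-3a-1)\alpha_0) \ne 0$ in characteristic $p$; this is a statement in the modular representation theory of $SL_3$, to be approached via the strong linkage principle combined with \cite[Theorem~4.5(i)]{and1979} and the recursive character formulas referenced in~\cite{hardesty}, or by direct verification once $a$ is taken large enough relative to $p$ that the relevant weight exits the bottom $p$-alcove. The second is that the extension class of the above short exact sequence is nonzero when projected to a simple subquotient $\IC([1,1,1], L(\mu))\la -2n\ra$ of $\mathcal{E}_a$; this will follow from the IC characterization (no nonzero subobject or quotient of $\IC([2,1], \bk_{-a})$ is supported at the origin), together with the fact that $\pi_{\cV*}\cO_\cV(-\alpha_0-\lambda_a)[-1]$ arises as a genuine pushforward from the smooth resolution $\cV$ and therefore admits no corresponding direct summand.

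I expect the main obstacle to be the first subclaim, namely producing, uniformly in $p > 3$, explicit weights whose higher cohomology refuses to vanish in the pattern predicted by BWB. Once such a witness is in hand, the translation to Lusztig--Vogan labels via Theorem~\ref{thm:graded-lv} and the nonsplitting argument for the short exact sequence are formal consequences of the machinery developed earlier in the paper, and yield the desired failure of positivity.
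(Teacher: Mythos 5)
Your strategy is sound, and it is essentially the Serre--Grothendieck dual of the paper's argument: the paper produces a nonzero map \emph{from} $\IC([2,1],\bk_a)\la 1\ra$ \emph{onto} a shifted skyscraper (using the lowest graded piece of its $\cH^2$), yielding a class in $\Ext^1(\fC_\lambda,\fC_{\mu'}\la m\ra)$ with $m>0$, whereas you extract a class in $\Ext^1(\fC_{\mu'},\fC_\lambda\la m\ra)$ from the truncation triangle of Proposition~\ref{prop:ic-a-calc}. The two nonvanishing conditions match under Serre duality on $G/B$: nonvanishing of $H^2(\lambda_a^\sigma+(n-3a-1)\alpha_0)$ is equivalent to that of $H^1(\lambda_a+(n'-3a-1)\alpha_0)$ with $n'=3a-n$, since the latter weight is exactly $-\mu-2\rho$ for the former weight $\mu$. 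Consequently your range $n>3a/2$ corresponds precisely to the range $n'<3a/2$ exploited in the paper, and your ``main obstacle'' is not one: the paper's witness $a=p$, $n'=p+1$ (weight $(-p,p,0)$, nonzero by~\cite[Theorem~4.5(i)]{and1979}) translates to $a=p$, $n=2p-1$, i.e.\ $H^2(p-2,-p,2)\ne 0$, with $2p-1>3p/2$ exactly because $p>2$. You must actually record such a witness; as written this step is only a plan.

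Your second subclaim also needs repair on two points. First, ``simple subquotient'' must be ``simple subobject'': an extension class pulls back along a subobject of the quotient term, and there is no reason it survives passage to an arbitrary subquotient of $\mathcal{E}_a$. Since $\bk[\cN]$ is nonnegatively graded and $\mathcal{E}_a$ is a finite-dimensional graded module placed in cohomological degree $3$ (Remark~\ref{rmk:pcoh-0}), you can arrange this by taking $n$ \emph{maximal} with $H^2(\lambda_a^\sigma+(n-3a-1)\alpha_0)\ne 0$: the top graded piece lies in the socle for the $\bk[\cN]$-action, so any irreducible $G$-submodule of it yields a genuine simple subobject $\IC([1,1,1],L(\mu))\la -2n\ra\subset\mathcal{E}_a$, and maximality only increases $n$, preserving $n>3a/2$. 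Second, the justification that $\pi_{\cV*}\cO_\cV(-\alpha_0-\lambda_a)[-1]$ ``arises as a pushforward from a smooth resolution and therefore admits no corresponding direct summand'' is not an argument---pushforwards from resolutions decompose all the time. What you actually need is that this object has no subobject supported at the origin, and that is proved inside Proposition~\ref{prop:ic-a-calc}: applying $\D_\cN$ converts such a subobject into a surjection $\pi_{\cV*}\cO_{\cV}(\lambda_a)\la 4\ra[-1]\twoheadrightarrow\cF'[-3]$, i.e.\ a nonzero class in $\Ext^{-2}$ between coherent sheaves, which is absurd. With that in hand, a splitting of the pulled-back extension would embed the skyscraper into the middle term, a contradiction; your bookkeeping $\Ext^1(\fC_{\mu'},\fC_\lambda\la 2n-3a\ra)\ne 0$ with $2n-3a>0$ is correct and completes the proof.
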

\begin{proof}
The ``if'' direction has been discussed above in the context of~\eqref{eqn:mixed}.  Now suppose that $\bk$ has positive characteristic (different from $2$ and $3$, so that (H1)--(H3) hold).  We must exhibit a counterexample to~\eqref{eqn:positive}.

We first claim that there exists a positive integer $a > 0$ and an integer $n$ with $0 < n < \frac{3}{2}a$ such that the representation
\begin{equation}\label{eqn:nz-cohom}
H^1(\lambda_a - (3a + 1 -n)\alpha_0) = H^1(n-1-2a,a,-n+1+a)
\end{equation}
is nonzero.  For instance, we may take $a = p$ and $n = p+1$, where we note that since $p>2$, we have $n=p+1 < 3p/2$. In this case, 
\[
(n-1-2a,a,-n+1+a) = (-p, p, 0), 
\]
and the nonvanishing of~\eqref{eqn:nz-cohom} follows from \cite[Theorem~4.5(i)]{and1979}. 

Choose an integer $a > 0$ so that~\eqref{eqn:nz-cohom} is nonzero for some $n$, and then choose $n$ to be as small as possible.  The representation~\eqref{eqn:nz-cohom} is then the leftmost nonzero term in the description of $\cH^2(IC([2,1],\bk_a)\la 1\ra)$ from Theorem~\ref{thm:main}.  In graded module language, this is the lowest nonzero homogeneous component of the graded $\bk[\cN]$-module $\cH^2(IC([2,1],\bk_a)\la 1\ra)$; it occurs in grading degree $2n-3a-3$.  Choose an irreducible quotient $L$ of $H^1(n-1-2a,a,-n+1+a)$. We deduce that there is a surjective map
\[
\cH^2(IC([2,1],\bk_a)\la 1\ra) \to i_*L\la 3a+3-2n\ra,
\]
where $i: C_{[1,1,1]} \hookrightarrow \cN$ is the inclusion map.  Next, note that $\cH^2(IC([2,1],\bk_a)\la 1\ra) \cong (\tau^{\ge 2}\IC([2,1],\bk_a)\la 1\ra)[2]$. It follows from the adjunction properties of truncation that there is a nonzero map
\[
\IC([2,1],\bk_a)\la 1\ra \to i_*L[-2]\la 3a+3-2n\ra.
\]
This map is an element of
\begin{multline*}
\Hom(\IC([2,1],\bk_a)\la 1\ra, i_*L[-2]\la 3a+3-2n\ra) \\
\cong \Ext^1_{\PCoh^\Gm(\cN)}(\IC([2,1],\bk_a)\la 1\ra, (i_*L[-3]\la3\ra) \la 3a - 2n \ra).
\end{multline*}
(Here we use~\cite[Remarque~3.1.17(ii)]{bbd} to convert the $\Hom$-group to an $\Ext^1$-group.)  By assumption, we have $3a -2n > 0$, so this contradicts~\eqref{eqn:positive}.
\end{proof}

\end{document}